\newtheorem{theo}{Theorem}
\newtheorem{lem}[theo]{Lemma}
\newtheorem{corollary}[theo]{Corollary}
\newtheorem{problem}[theo]{Problem}
\theoremstyle{definition}
\theoremstyle{remark}
\newcounter{casenum}[theo]
\newcounter{subcasenum}[theo]
\newcounter{claimnum}[theo]
\begin{document}
\thispagestyle{plain}

\begin{center} {\Large Connected graphs with a given dissociation number attaining the minimum spectral radius
}
\end{center}
\pagestyle{plain}
\begin{center}
{
  {\small  Zejun Huang$^{1}$, Jiahui Liu$^{1}$\footnote{Corresponding author. \\ \indent~~ Email: mathzejun@gmail.com (Huang), mathjiahui@163.com (Liu), 2015110112@email.szu.edu.cn (Zhang)}, Xinwei Zhang$^{1}$}\\[3mm]
  {\small 1. School of Mathematical Sciences, Shenzhen University, Shenzhen 518060, China }\\

}
\end{center}

\begin{center}

\begin{minipage}{140mm}
\begin{center}
{\bf Abstract}
\end{center}
{\small    A dissociation set of a graph  is a set of vertices which induces a subgraph with maximum degree less than or equal to one. The dissociation number of a graph  is the maximum cardinality of its dissociation sets. In this paper, we study the   connected graphs of order $n$ with a given dissociation number that attains the minimum spectral radius. We characterize these graphs when the dissociation number is in $\{n-1,~n-2,~\lceil2n/3\rceil,~\lfloor2n/3\rfloor,~2\}$. We also prove that these graphs are trees when the dissociation number is larger than $\lceil {2n}/{3}\rceil$.

{\bf Keywords:} connected graph;  dissociation number; independence number;  spectral radius }

{\bf Mathematics Subject Classification:} 05C50, 05C20, 15A99
\end{minipage}
\end{center}

\section{Introduction and main results}

   Graphs in this paper are simple.  For a graph $G$, we denote by $V(G)$ its  vertex set and $E(G)$ its edge set. The cardinalities of $V(G)$ and $E(G)$ are called the {\it order} and the {\it size} of $G$, respectively. For a set $S\subseteq V(G)$,  $G[S]$ is  the subgraph of $G$ induced by $S$. We denote by $G-S$ the induced subgraph $G[V\setminus S]$, which is also denoted by $G-v$ if $S=\{v\}$ is a singleton.   If $e\in E(G)$, $G-e$ is the graph obtained from $G$ by deleting the edge $e$; if $e\not\in E(G)$, $G+e$ is the graph obtained from $G$ by adding the edge $e$.

The {\it union} of two graphs $G_1$ and $G_2$ is the graph $G_1\cup G_2$ with vertex set $V(G_1)\cup V(G_2)$ and edge set $E(G_1)\cup E(G_2)$, which is also called  a {\it disjoint union} and denoted by $G_1+G_2$ if $V(G_1)\cap V(G_2)=\emptyset$. We denote by $kG$ the  disjoint union of $k$ copies of $G$.
The {\it join} of    two vertex disjoint graphs $G_1$ and $G_2$, denoted by $G_1\vee G_2$, is obtained from $G_1+G_2$ by adding all edges $uv$ with $u\in V(G_1)$ and $v\in V(G_2)$.

Given two graphs $G$ and $H$, if there  is a  bijection  $\phi: V(G)\rightarrow V(H)$  such that $uv\in E(G)$ if and only if $\phi(u)\phi(v)\in E(H)$, then $G$ and $H$ are  said to be {\it isomorphic}, written $G\cong H$.

 The {\it spectral radius} of a graph $G$, denoted by $\rho(G)$, is the largest eigenvalue of its adjacency matrix $A(G)$. When $G$ is connected, by the Perron-Frobenius theorem (see \cite{Zhan}), there is a unique positive unit eigenvector of $A(G)$ corresponding to the eigenvalue $\rho(G)$, which is called the {\it Perron vector} of $G$.

Given a set of graphs, it is natural to consider the possible values of certain parameters on these graphs. In 1986, Brualdi and Solheid \cite{BS} proposed the general problem on determining the maximum or minimum spectral radius   of graphs from a given set as well as the extremal graphs attaining the maximum or minimum spectral radius. A lot  of works have been done along this line. Brualdi and Hoffman \cite{BRUALDI1985133} determined the maximum spectral radius of a graph $G$ of order $n$ with size exactly
 ${s\choose 2}$, which is attained if and only if $G\cong K_s+ (n-s)K_1$.
Hong, Shu and Fang \cite{HONG2001177}
 determined the maximum spectral radius of connected graphs with given order, size and minimum degree as well as the extremal graphs. Berman and Zhang \cite{BERMAN2001233} characterized  the graphs  with the maximum spectral radius among connected graphs of order  $n$ with a given number of cut vertices.  Feng, Yu and Zhang \cite{FENG2007133} characterized  the graphs with the maximum spectral radius among  graphs  of order $n$ with a given  matching number. Liu, Lu and Tian \cite{LIU2007449} determined the graphs with the maximum spectral radius among unicyclic graphs of order
$n$ with a given  diameter. Van Dam and Kooij \cite{VANDAM2007408} characterized the connected graphs with the minimum spectral radius among    connected graphs of order $n$ with a given diameter from $ \{1,~2,~\lfloor\frac{n}{2}\rfloor,~n-3,~n-2,~n-1\}$.

On graphs with a given independent number, the characterization of such connected graphs of order $n$ attaining the maximum spectral radius is trivial. However, it is not easy to characterize the connected graphs of order $n$ with a given independence number $\alpha$ that attains the minimum spectral radius.   Xu, Hong, Shu and Zhai \cite{XU2009937} proposed this problem in 2009 and they solved the problem for the cases $\alpha\in\{1,~2,~\lceil\frac{n}{2}\rceil,~\lceil\frac{n}{2}\rceil+1,~n-3,~n-2,~n-1\}$; Du and Shi \cite{Du2013GraphsWS} solved the problem when  $\alpha=3,4$ and the order $n$ is divided by $\alpha$;   Lou and Guo \cite{LOU2022112778} solved the problem for the case  $\alpha=n-4$ and they proved that the extremal graphs are trees when $\alpha\ge \lceil n/2\rceil$;   Hu, Huang and Lou \cite{hu2022graphs} solved the problem for the cases $\alpha\in\{n-5,~n-6\}$, and they gave the structural features for the  extremal graphs in detail as well as    a constructing theorem for the extremal graphs when $\alpha\ge\lceil\frac{n}{2}\rceil$; Choi and Park \cite{choi2023minimal}  solved the problem for the case $\alpha=\lceil\frac{n}{2}\rceil-1$. For the other cases, the problem is still open.

A set $S\subseteq V(G)$ is called  a {\it dissociation set} if the induced subgraph $G[S]$ has maximum degree at most $1$. A {\it maximum dissociation set} is a  dissociation set  of $G$ with maximum cardinality. The {\it dissociation  number} of a graph $G$, denoted by $diss(G)$, is the cardinality of   a maximum dissociation set in $G$. The dissociation set is closely related to the {\it vertex $k$-path cover}, which is a set of vertices intersecting every path of order $k$ in $G$. Notice that the vertex $k$-path cover   has applications in   secure communication in  wireless sensor networks; see \cite{BKKS,CHZ}. Problems on the dissociation number and dissociation sets have been widely studied; see \cite{bock2022bound,bock2022relating,das2023spectral,ORLOVICH20111352,SL,TU2022127107, TZS, doi:10.1137/0210022}.

The dissociation number is a natural generalization of the independence number.
Similarly to the minimum spectral radius problem on the independence number, we study the following problem in this paper.
\begin{problem}
Let $n,k$ be integers such that $2\le k\le n$.
Denote by $ \mathcal{G}_{n,k}$ the connected graphs of order $n$ with dissociation number $k$. Characterize the graphs in  $ \mathcal{G}_{n,k}$ that attain the minimum spectral radius.
\end{problem}

Notice that the characterization of graphs in  $ \mathcal{G}_{n,k}$ attaining the maximum spectral radius is trivial. In fact, suppose $G\in \mathcal{G}_{n,k}$. Then by choosing an arbitrary   dissociation set  with cardinality $k$, it is not difficult to prove that $G$ attains the maximum spectral radius if and only if $G\cong K_{n-k}\vee  \frac{k}{2}K_2$ when $k$ is even and $G\cong K_{n-k}\vee  (\frac{k-1}{2}K_2+K_1)$ when $k$ is odd.

   We denote by $P_n$ and $C_n$ the path and the cycle of order $n$, respectively. The path $P_{n+1}$ is also called an  {\it $n$-path}, since its length is $n$. In a tree $T$, a vertex with degree larger than 2 is called a {\it branch vertex}. If $v$ is a branch vertex of a tree $T$ such that $T-v$ has at most one component containing a branch vertex, then $v$ is called an {\it end branch vertex}.
   If a $k$-path $P$ is an induced subgraph of $T$ with one of its end adjacent to a branch vertex $v\in V(T)$, then $P$ is called a {\it branch  $k$-path} attached to $v$, which is also called a  {\it branch edge } attached to $v$ if $P$ is an edge. If a leaf is adjacent to a branch vertex   $v\in V(T)$, we say the leaf is   attached  to $v$.

   A {\it complete $k$-partite graph} $G$ of order $n$ is a graph whose vertex set can be partitioned into $k$ subsets $V_1,\ldots,V_k$ such that $uv\in E(G)$ if and only if $u\in V_i,v\in V_j$ with $i\ne j$. Moreover, if  $|V_i|\in \{\lceil n/k\rceil,\lfloor n/k\rfloor\}$ for $i=1,2,\ldots,k$, $G$  is said to be {\it balanced}.

Denote by $S_n$ the star graph of order $n$. Let $S(r,t)$ be the graph obtained from $S_{r+1}$ by attaching $t$  branch edges to the center of $S_{r+1}$.
If $n$ is even, we denote by $H(n)$  the graph obtained from $P_4$ by attaching $\lceil(n-4)/2\rceil$ and $\lfloor(n-4)/2\rfloor$ branch edges to the two ends  of $P_4$  respectively; if $n$ is odd, we denote by $H(n)$ the graph obtained from $H(n-1)$ by attaching a   leaf to the branch vertex of $H(n-1)$ with degree $\lfloor (n-5)/2\rfloor+1$. What follows are the diagrams of $S(r,t)$ and $H(n)$.

\begin{figure}[H]
	\centering
	\begin{subfigure}{.2\textwidth}
		\centering
			\begin{tikzpicture}
			\draw (0,0)[fill]circle[radius=1.0mm]--(0,1)[fill]circle[radius=1.0mm];
			\draw (0,0)[fill]circle[radius=1.0mm]--(0,-1)[fill]circle[radius=1.0mm];
			\draw(0,-2)[fill]circle[radius=1.0mm]--(0,-1);
			\draw (0,0)[fill]circle[radius=1.0mm]--(0.8,0.7)[fill]circle[radius=1.0mm];
			\draw (0,0)[fill]circle[radius=1.0mm]--(0.7,-0.75)[fill]circle[radius=1.0mm];
			\draw (1.35,-1.4)[fill]circle[radius=1.0mm]--(0.7,-0.75)[fill]circle[radius=1.0mm];
			\node at (0.39,0.93){\begin{turn}{10}$\ddots$\end{turn}};
			\node at (0.4,-0.93){\begin{turn}{25}$\cdots$\end{turn}};
			\draw[decorate,decoration={brace,mirror,amplitude=0.2cm}](0.8,0.82)--(0,1.12);
			\draw[decorate,decoration={brace,mirror,amplitude=0.2cm}](0,-2.13)--(1.35,-1.53);
			\node at (1,-1.8)[below=3pt]{$t$};
			\node at (0.69,0.85)[above=3pt]{$r$};
			\end{tikzpicture}
		\caption*{$S(r,t)$}
		\label{fig:sub7.1}
	\end{subfigure}%
\begin{subfigure}{.33\textwidth}
	\centering
	\vspace{11pt}
	\begin{tikzpicture}
	\foreach \y in {2}{
		\foreach \x in {3,4,6,7}{
			\draw (\x, \y)[fill]circle[radius=1.0mm];
			
		}
	};
	\draw(4,3.5)[fill]circle[radius=1.0mm]--(6,3.5)[fill]circle[radius=1.0mm];
	
	\foreach \y in {1}{
		\foreach \x in {3,4,7,6}{
			\draw (\x, \y)[fill]circle[radius=1.0mm];
			
		}
	};
	\foreach \y in{2}{
		\foreach \x in {3,...,4}{
			\draw(\x,\y)--(4,3.5);
		}
	};
	\foreach \y in{2}{
		\foreach \x in {6,...,7}{
			\draw(\x,\y)--(6,3.5);
		}
	};
	\foreach \y in{1}{
		\foreach \x in {3,4,7,6}{
			\draw(\x,\y)--(\x,2);}};
	\draw[decorate,decoration={brace,mirror,amplitude=0.2cm}](3,0.85)--(4,0.85);
	\node at(3.5,0.8)[below=3pt]{$\lceil\frac{n-4}{2}\rceil$};
	\draw[decorate,decoration={brace,mirror,amplitude=0.2cm}](6,0.85)--(7,0.85);
	\node at(6.5,0.8)[below=3pt]{$\lfloor\frac{n-4}{2}\rfloor$};
	\node at (3.5,1){$\cdots$};
	\node at ((6.5,1){$\cdots$};;
	\draw(4.7,3.5)[fill]circle[radius=1.0mm];
	\draw(5.35,3.5)[fill]circle[radius=1.0mm];
	\end{tikzpicture}
	\caption*{$H(n)$, $n$ is even}
	\label{fig:111}
\end{subfigure}
	\begin{subfigure}{.33\textwidth}
	\centering
	\vspace{11pt}
	\begin{tikzpicture}
	\foreach \y in {2}{
		\foreach \x in {3,4,6,7}{
			\draw (\x, \y)[fill]circle[radius=1.0mm];
			
		}
	};
\draw (7,3.5)[fill]circle[radius=1.0mm]--(6,3.5);
	\draw(4,3.5)[fill]circle[radius=1.0mm]--(6,3.5)[fill]circle[radius=1.0mm];
	
	\foreach \y in {1}{
		\foreach \x in {3,4,7,6}{
			\draw (\x, \y)[fill]circle[radius=1.0mm];
			
		}
	};
	\foreach \y in{2}{
		\foreach \x in {3,...,4}{
			\draw(\x,\y)--(4,3.5);
		}
	};
	\foreach \y in{2}{
		\foreach \x in {6,...,7}{
			\draw(\x,\y)--(6,3.5);
		}
	};
	\foreach \y in{1}{
		\foreach \x in {3,4,7,6}{
			\draw(\x,\y)--(\x,2);}};
	\draw[decorate,decoration={brace,mirror,amplitude=0.2cm}](3,0.85)--(4,0.85);
	\node at(3.5,0.8)[below=3pt]{$\lceil\frac{n-5}{2}\rceil$};
	\draw[decorate,decoration={brace,mirror,amplitude=0.2cm}](6,0.85)--(7,0.85);
	\node at(6.5,0.8)[below=3pt]{$\lfloor\frac{n-5}{2}\rfloor$};
	\node at (3.5,1){$\cdots$};
	\node at ((6.5,1){$\cdots$};;
	\draw(4.7,3.5)[fill]circle[radius=1.0mm];
	\draw(5.35,3.5)[fill]circle[radius=1.0mm];
	\end{tikzpicture}
	\caption*{$H(n)$, $n$ is odd}
	\label{fig:123}
\end{subfigure}
	\caption{The graphs $S(r,t)$ and $H(n)$}
\label{fig:7}
\end{figure}
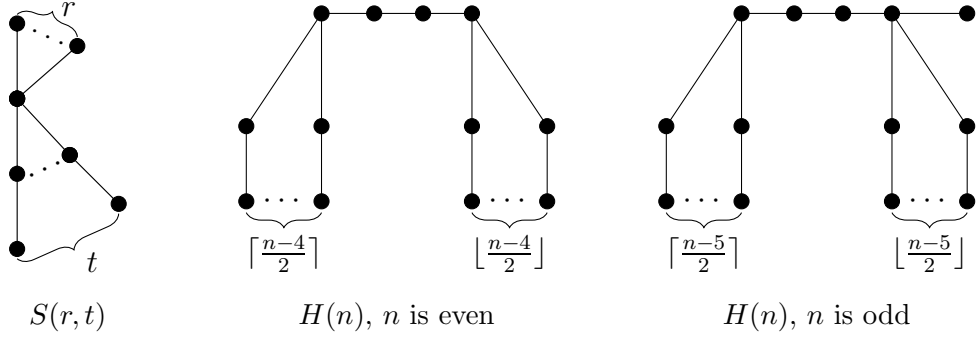

Our main results state  as follows.

\begin{theo}\label{th2}
Let $n$ and $k$ be positive integers with $ k>\lceil\frac{2n}{3}\rceil $.	If $G$ attains the minimum spectral radius  in $\mathcal{G}_{n,k}$, then $G$ is a tree.
\end{theo}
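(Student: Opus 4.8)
The plan is to argue by contradiction via edge deletion. Suppose $G\in\mathcal{G}_{n,k}$ attains the minimum spectral radius but is not a tree; being connected, $G$ then contains a cycle. Two elementary facts drive the argument. First, since $G$ is connected, by the Perron--Frobenius theorem deleting any edge $e$ strictly decreases the spectral radius, i.e. $\rho(G-e)<\rho(G)$ (evaluate the Rayleigh quotient of $A(G)$ at the positive Perron vector of $G-e$). Second, edge deletion moves the dissociation number by at most one: any dissociation set of $G$ remains one in $G-e$, so $diss(G-e)\ge diss(G)$, while deleting one endpoint of $e$ from a maximum dissociation set of $G-e$ yields $diss(G-e)\le diss(G)+1$; hence $diss(G-e)\in\{k,k+1\}$. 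Consequently, if some edge $e$ on a cycle satisfies $diss(G-e)=k$, then $G-e$ is still connected (a cycle edge is not a bridge) and lies in $\mathcal{G}_{n,k}$ with $\rho(G-e)<\rho(G)$, contradicting minimality. So it suffices to exhibit one cycle edge whose deletion preserves the dissociation number; call such an edge \emph{safe}.

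Next I would fix a maximum dissociation set $S$ with $|S|=k$ and write $R=V(G)\setminus S$, $r=|R|=n-k$. The hypothesis $k>\lceil 2n/3\rceil$ gives $3k>2n$, hence $k>2r$; thus $S$ is large and $R$ is small, with $r<n/3$. By definition $G[S]$ has maximum degree at most one, so it is a disjoint union of a matching $M$ with $p:=|M|$ edges and a set $I$ of isolated vertices, where $2p+|I|=k$. A crucial structural consequence is that cycles are localized around $R$: along any cycle a maximal run of vertices lying in $S$ contains at most one edge, because two consecutive $S$-vertices belong to the same component of $G[S]$, which is a single matching edge. Hence the $S$-blocks traversed by a cycle are pairwise separated by vertices of $R$, and in particular every cycle meets $R$ (a cycle entirely inside $S$ would force a cycle in $G[S]$, impossible at maximum degree one).

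The heart of the proof is to show that a safe cycle edge always exists. I would assume the contrary: every edge $e$ on a cycle has $diss(G-e)=k+1$, witnessed by a dissociation set $D_e$ with $|D_e|=k+1$. Since $D_e$ fails to be a dissociation set of $G$ only because of $e=uv$, both $u,v\in D_e$ and, in $(G-e)[D_e]$, at least one endpoint (say $u$) already has a neighbour $a\in D_e$ with $a\ne v$; were neither endpoint so saturated, $D_e$ would itself be a dissociation set of $G$ of size $k+1$, contradicting the maximality of $S$. I would then run a case analysis on the position of a carefully chosen cycle edge relative to the partition $(S,R)$ --- both endpoints in $R$, one endpoint in each part, or a matching edge of $M$ lying on a cycle --- and in each case combine the witness $D_e$ with the localization of the cycle to either relocate a vertex so as to enlarge $S$ (contradicting $diss(G)=k$) or directly produce a safe edge. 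The counting is where the threshold enters: the abundance of $S$ against the scarcity of $R$, quantified by $k>2r$, is precisely what guarantees enough ``free'' vertices of $S$ along and around the cycle to carry out such a repair.

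The main obstacle I anticipate is the bookkeeping in this last step: edge deletion can inflate the dissociation number in several combinatorially distinct ways, depending on whether one or both endpoints of the deleted edge are saturated inside $D_e$, and on how the unique alternative $u$--$v$ path supplied by the cycle interacts with $S$ and $R$; one must check that the inequality $k>2r$ closes every case. I expect that choosing the deleted edge judiciously --- for instance an edge incident to an $R$-vertex of a shortest cycle, or an edge incident to an isolated vertex of $G[S]$ whenever one lies on the cycle --- will cut down the number of cases. The threshold $k>\lceil 2n/3\rceil$ should be tight for a transparent reason: the cycle $C_n$ already has dissociation number only $\lfloor 2n/3\rfloor$, so a connected graph with strictly larger dissociation number simply cannot afford to retain a cycle, which is exactly the phenomenon the counting is meant to formalize.
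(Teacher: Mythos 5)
Your reduction is sound as far as it goes: deleting a cycle edge keeps $G$ connected and strictly lowers $\rho$, and indeed $diss(G-e)\in\{diss(G),\,diss(G)+1\}$, so your whole proof hinges on the ``safe edge'' lemma --- that some cycle edge can be deleted without changing the dissociation number. The genuine gap is that this lemma is not proved, and worse, it is \emph{false} under exactly the hypotheses your counting argument would use (connectivity, existence of a cycle, and $k>\lceil 2n/3\rceil$, i.e.\ $k>2|R|$). Concretely, take $n=12$ and let $G$ be the $4$-cycle $abcd$ together with a pendant path $au_1u_2$ and six leaves attached to $u_2$. Then $G$ is connected and $diss(G)=9>8=\lceil 2n/3\rceil$: a maximum dissociation set is the six leaves together with $\{u_1,a,c\}$, while any dissociation set containing $u_2$ has size at most $4$. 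Yet every one of the four cycle edges is unsafe. Deleting $bc$ admits the dissociation set consisting of the six leaves, $u_1$, $b$, $c$, $d$ (the only induced edge is $cd$), of size $10$; deleting $ab$ admits the six leaves together with $u_1,a,b,c$ (inducing the two edges $u_1a$ and $bc$), again of size $10$; the edges $cd$ and $da$ are symmetric. So no bookkeeping over the partition $(S,R)$ and the witnesses $D_e$ can close your case analysis: the claim you are trying to prove fails for a graph satisfying all the combinatorial data you allow yourself. Any correct version of your plan would have to invoke the spectral minimality of $G$ inside the safe-edge argument, which your sketch never does --- and using minimality means comparing $G$ to some \emph{other} member of $\mathcal{G}_{n,k}$ with smaller spectral radius, i.e.\ constructing that member, which is precisely the difficulty.

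This is why the paper takes a different route. It deletes edges all the way down to a spanning tree $T$, accepting that $diss(T)$ overshoots $k$, and then runs a sequence of tree transformations --- cutting a pendant $P_3$ from one branch path and reattaching it to the longest one, and $1$- or $3$-subdivisions of internal edges combined with deletion of pendant vertices (Lemmas \ref{lemma2}, \ref{lemma12}, \ref{lemmah13} and \ref{lemma14}) --- each of which strictly decreases the spectral radius while decreasing the dissociation number by at most one. A discrete intermediate-value argument along this sequence then lands on a graph in $\mathcal{G}_{n,k}$ with spectral radius below $\rho(G)$, giving the contradiction. That ``overshoot, then walk back down in unit steps'' mechanism is what your proposal is missing, and the example above shows that something of this kind (rather than a single edge deletion staying inside $\mathcal{G}_{n,k}$) is genuinely needed.
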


\begin{theo}\label{th1}
Let $n$ and $k$ be positive integers with $2\le k\le n-1$. Suppose $G$ attains the minimum spectral radius in $\mathcal{G}_{n,k}$.

(i) If $k=n-1$, then $G\cong   S(r,\lfloor(n-1)/2\rfloor),$ where $r=0$ if $n$ is odd and $r=1$ if $n$ is even.

(ii) If $k=n-2$ with $n\ge 10$, then $G\cong H(n)$.

(iii) If $k=\lceil 2n/3\rceil$, then $G$ is a path.

(iv) If $k=\lfloor 2n/3\rfloor$ with $n\ne 0 ~(mod ~3)$, then $G$ is  a cycle.

(v) If $k=2$, then $G$ is a balanced complete $\lceil n/2\rceil$-partite graph.
\end{theo}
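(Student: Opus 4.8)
The plan is to combine three tools: the classical classification of connected graphs with spectral radius at most $2$, the reduction to trees given by Theorem~\ref{th2}, and local transformations that strictly decrease $\rho$. Throughout I use the elementary identities $diss(P_m)=\lceil 2m/3\rceil$ and $diss(C_m)=\lfloor 2m/3\rfloor$, both coming from the fact that a dissociation set of a path or cycle is a vertex set with no three consecutive vertices. Case (iii) is then immediate: $P_n$ is the unique connected graph of order $n$ attaining the globally minimum spectral radius and it satisfies $diss(P_n)=\lceil 2n/3\rceil=k$, so it already lies in $\mathcal{G}_{n,k}$ and must be the minimizer. For case (v) I would first show that $diss(G)=2$ is equivalent to $\overline{G}$ having maximum degree at most $1$, i.e. $G$ is complete multipartite with all parts of size at most $2$; since any matching extends to a near-perfect one, the complement of a maximum matching embeds as a spanning subgraph of every such graph, and strict monotonicity of $\rho$ under edge addition forces the minimizer to be this complement, namely the balanced complete $\lceil n/2\rceil$-partite graph.

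For case (iv) I would use the complete list of connected graphs with $\rho\le 2$: paths and the Dynkin trees $D_m,E_6,E_7,E_8$ (all with $\rho<2$), together with the cycles $C_m$ and the Smith trees $\widetilde{D}_m,\widetilde{E}_6,\widetilde{E}_7,\widetilde{E}_8$ (all with $\rho=2$, and $\widetilde{D}_4=K_{1,4}$). Since $C_n$ realizes $diss=\lfloor 2n/3\rfloor=k$ with $\rho(C_n)=2$, it is enough to prove that every $G\in\mathcal{G}_{n,k}$ has $\rho(G)\ge 2$, with equality only for $C_n$. The decisive computation is that each of $P_m$, $D_m$ and $\widetilde{D}_m$ has dissociation number $\lceil 2m/3\rceil$ in its order $m$; because $n\not\equiv 0\pmod 3$ this exceeds $\lfloor 2n/3\rfloor$, so none of the $\rho<2$ graphs nor any Smith tree of order $n$ lies in $\mathcal{G}_{n,k}$. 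The finitely many exceptional graphs $E_6,E_7,E_8,\widetilde{E}_6,\widetilde{E}_7,\widetilde{E}_8$ are dealt with by direct inspection. Hence the only connected graph of order $n$ with $\rho\le 2$ and $diss=\lfloor 2n/3\rfloor$ is $C_n$.

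Cases (i) and (ii) begin by invoking Theorem~\ref{th2}: for the relevant ranges $n-1,n-2>\lceil 2n/3\rceil$, so the minimizer is a tree and I may argue entirely within trees. For (i), the single vertex outside a maximum dissociation set must be deleted to leave a disjoint union of $K_1$'s and $K_2$'s, which forces the tree to be some $S(r,t)$ with $r+2t=n-1$; I would then prove that trading two pendant leaves at the center for one branch edge, that is, passing from $S(r,t)$ to $S(r-2,t+1)$, strictly decreases $\rho$, so the minimizer has $r\in\{0,1\}$ and equals $S(r,\lfloor(n-1)/2\rfloor)$. Case (ii) follows the same scheme, but the trees with $diss=n-2$, namely those from which two vertices may be removed to leave maximum degree at most $1$, form a far richer family; I would classify them according to the adjacency and the neighbourhoods of the two deleted vertices and then drive each admissible tree toward $H(n)$ by a sequence of monotone transformations, the hypothesis $n\ge 10$ serving to exclude the sporadic small configurations.

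The main obstacle is case (ii). Both the structural classification of the trees with $diss(T)=n-2$ and the verification that each of them has spectral radius at least $\rho(H(n))$ are delicate, and the transformations used must be chosen so as to simultaneously preserve connectedness, keep the dissociation number equal to $n-2$, and strictly lower $\rho$. Assembling a sufficiently flexible family of such monotone moves of leaf-to-path and branch-rebalancing type, and checking that they steer every admissible tree to $H(n)$, is where essentially all of the effort is concentrated.
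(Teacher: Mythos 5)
Your treatments of parts (i), (iii), (iv) and (v) are correct and essentially coincide with the paper's route: (iii) and (iv) follow from Smith's classification of connected graphs with $\rho\le 2$ together with the computations $diss(P_m)=diss(W_m)=diss(\tilde{W}_m)=\lceil 2m/3\rceil$ and $diss(C_m)=\lfloor 2m/3\rfloor$ (Lemmas \ref{lemma1}, \ref{lemma6}, \ref{lemma7}, \ref{lemma8}); (i) follows from the tree reduction (Theorem \ref{th2}), the observation that deleting the unique uncovered vertex leaves a disjoint union of $K_1$'s and $K_2$'s so that $G\cong S(r,t)$, and the Li--Feng lemma (Lemma \ref{lemma2}) to push $r$ down to $0$ or $1$. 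Your complement formulation of (v) is a mild repackaging of the paper's argument (the paper deletes an edge inside the clique part and invokes monotonicity; you embed the complement of a near-perfect matching as a spanning subgraph of every graph with $diss=2$); both are fine.

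The genuine gap is part (ii), and you have flagged it yourself: what you offer there is a plan, not a proof, and the steps you defer are precisely where all of the paper's work lies. Two things are missing. First, the structural classification: with a maximum dissociation set $S$ and $V(G)\setminus S=\{v_1,v_2\}$, the paper shows a tree in $\mathcal{G}_{n,n-2}$ lies in one of four explicit families $G_1(r,s,p,q),\ldots,G_4(r,s,p,q)$, according to whether $v_1v_2$ is an edge, or $v_1,v_2$ are joined through an isolated vertex of $G[S]$, through both ends of an edge of $G[S]$, or through one end of such an edge; then subdivisions of internal paths (Lemma \ref{lemma12}) combined with vertex deletions that keep the dissociation number equal to $n-2$ eliminate $G_1$, $G_2$ and $G_4$, and Lemma \ref{lemma2} together with the paper's Claim 2 forces $r+p\le 1$ in $G_3(r,s,p,q)$. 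Second, and more seriously, the ``monotone rebalancing moves'' you appeal to do not come off the shelf: to apply the edge-relocation lemma (Lemma \ref{lemma13}) one must first know which of two Perron components is larger, and establishing those inequalities is the entire content of the paper's Claims 2 and 3. For instance, the paper rules out $x_4\le x_8$ by playing Lemma \ref{lemma12} against Lemma \ref{lemma13} (otherwise one and the same graph would have spectral radius both larger and smaller than $\rho(G_3(0,s,0,q))$), then derives $x_1\ge x_2>x_7$ from the eigenvalue equations; and in the boundary case $q=s+1$, where this component argument degenerates, it instead compares characteristic polynomials --- a sextic for $G_3(0,s,0,q)$ and a quartic for $G_3(1,s,1,q-1)$, computed in the Appendix --- and checks a sign. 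Without carrying out these comparisons (or an equivalent), the assertion that every admissible tree can be driven to $H(n)$ by radius-decreasing, dissociation-preserving transformations is exactly the statement to be proved, not a proof of it.
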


\section{Preliminary}

In this section, we present some preliminary lemmas. Some of these lemmas concern  the dissociation number of graphs, which have independent interests.

\begin{lem}
	\label{lemma3}\cite{Hoffman_1975} If $G_2$ is a proper subgraph of a graph $G_1$, then $\rho(G_1)>\rho(G_2)$.		
\end{lem}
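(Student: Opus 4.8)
The plan is to treat this as a matrix monotonicity statement and to invoke the Perron--Frobenius theory recalled in the introduction. First I would make explicit the hypothesis under which the strict inequality is correct, namely that $G_1$ is \emph{connected}, so that its adjacency matrix $A(G_1)$ is irreducible; this is the setting in which the lemma is used throughout the paper, and connectedness is genuinely needed, since for a disconnected host strictness can fail (for instance $K_2$ is a proper subgraph of $K_2+K_1$, yet both have spectral radius $1$). Assuming $G_1$ connected with at least two vertices, I would embed $G_2$ into the vertex set of $G_1$: let $\tilde A_2$ be the $|V(G_1)|\times|V(G_1)|$ matrix with $(\tilde A_2)_{uv}=1$ exactly when $uv\in E(G_2)$, so that $\tilde A_2$ is the adjacency matrix of $G_2$ together with some isolated vertices. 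Appending isolated vertices only adjoins zero eigenvalues, so $\rho(\tilde A_2)=\rho(G_2)$, and because $G_2$ is a \emph{proper} subgraph we have $0\le \tilde A_2\le A(G_1)$ entrywise with $\tilde A_2\ne A(G_1)$: either an edge of $G_1$ is missing from $G_2$, or a deleted vertex is (by connectedness) incident to an edge that is now absent.

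The heart of the argument is the following standard fact, which I would state and prove: if $0\le A\le B$ entrywise, $A\ne B$, and $B$ is irreducible, then $\rho(A)<\rho(B)$. Let $w>0$ be the strictly positive left Perron vector of $B$, so $w^\top B=\rho(B)\,w^\top$, and let $z\ge 0$, $z\ne 0$, be a nonnegative Perron eigenvector of $A$, so $Az=\rho(A)\,z$. Pairing gives
\[
\rho(A)\,w^\top z=w^\top A z\le w^\top B z=\rho(B)\,w^\top z,
\]
and $w^\top z>0$ already yields $\rho(A)\le\rho(B)$. For strictness, suppose $\rho(A)=\rho(B)=:\rho$; then the chain forces $w^\top(B-A)z=0$, and since $w>0$, $B-A\ge 0$, $z\ge 0$, each term $w_i(B-A)_{ij}z_j=0$, so $(B-A)z=0$ and hence $Bz=Az=\rho z$. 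Thus $z$ is a nonnegative eigenvector of the irreducible matrix $B$ for its spectral radius, which forces $z>0$. But $A\ne B$ provides an index pair with $(B-A)_{i_0 j_0}>0$, and $(B-A)_{i_0 j_0}z_{j_0}=0$ then gives $z_{j_0}=0$, contradicting $z>0$. Applying this with $A=\tilde A_2$ and $B=A(G_1)$ yields $\rho(G_2)=\rho(\tilde A_2)<\rho(A(G_1))=\rho(G_1)$.

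The only delicate point is \emph{strictness}: entrywise domination alone gives $\rho(A)\le\rho(B)$, and upgrading this to a strict inequality is exactly where irreducibility (connectedness of $G_1$) is indispensable. An alternative route I would keep in reserve reduces the general case to one modification at a time---deleting a single edge, or a single vertex with its incident edges---and estimates via the Rayleigh quotient $\rho(G)=\max_{x\ne 0} x^\top A(G)x/(x^\top x)$ evaluated at the Perron vector of $G_1$; but that chain reduction needs extra care to keep the intermediate graphs connected, so the direct irreducibility argument above is the cleaner one to write up.
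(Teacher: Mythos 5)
Your proposal is correct, but there is nothing in the paper to compare it against line by line: the paper states this lemma as a citation to Hoffman--Smith (reference \cite{Hoffman_1975}) and gives no proof at all, treating it as classical background. What you have written is a complete, self-contained proof via Perron--Frobenius theory, and it is sound: the padding of $A(G_2)$ by zero rows and columns, the bilinear pairing $w^\top A z \le w^\top B z$ with the left Perron vector $w>0$ of $B$ and a nonnegative eigenvector $z$ of $A$, and the strictness argument (equality forces $(B-A)z=0$, hence $z$ is a nonnegative eigenvector of the irreducible matrix $B$ for $\rho(B)$, hence $z>0$, contradicting $A\ne B$) are all standard and correctly executed. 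Two points in your write-up are genuine value added beyond the paper: first, you observe that the lemma as literally stated is false without assuming $G_1$ connected (your example $K_2$ inside $K_2+K_1$, both with spectral radius $1$, is valid), whereas the paper silently omits this hypothesis even though every application in the paper is to a connected host graph; second, you isolate exactly where irreducibility enters, namely in upgrading $\rho(A)\le\rho(B)$ to a strict inequality. Your fallback suggestion (edge-by-edge deletion with Rayleigh quotients) is indeed more delicate for vertex deletions, so your choice of the direct matrix argument is the right one.
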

\begin{lem}
	\label{lemma2}\cite{0} Let $v$ be a vertex in a connected graph  $G$, and let $k, m$ be nonnegative integers such that $k\ge m$. Denote by $G_{k,m}$ the graph obtained from $G$ by attaching   two new paths   $vv_1v_2\dots v_k$ and  $vu_1u_2\dots u_m$  to $v$, where $v_1,\ldots, v_k,u_1,\ldots,u_m$ are distinct and $\{v_1,\ldots,v_k,u_1,\ldots,u_m\}\cap V(G)=\emptyset$.  Then  $\rho(G_{k,m})>\rho(G_{k+1,m-1})$.
\end{lem}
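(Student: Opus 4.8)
The plan is to compare the two characteristic polynomials and locate their largest roots, rather than to manipulate Perron vectors directly. Write $\phi(H,x)=\det(xI-A(H))$, set $\Phi=\phi(G,x)$ and $\mu=\phi(G-v,x)$, and let $p_t=\phi(P_t,x)$ be the characteristic polynomial of the path on $t$ vertices, normalised by $p_0=1$, $p_{-1}=0$ and the recursion $p_t=xp_{t-1}-p_{t-2}$. Since each attached path ends in a leaf, expanding repeatedly along a pendant path (deleting the leaf at each step) produces the three-term recursion in the path length; solving it with the obvious initial data gives first the one-path formula $\phi(G_{a,0},x)=\Phi p_a-\mu p_{a-1}$ and then, treating the second path the same way, the closed form
\[
\phi(G_{a,b},x)=\Phi\,p_ap_b-\mu\,(p_{a-1}p_b+p_ap_{b-1}),
\]
which is pleasingly symmetric in $a$ and $b$.

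Next I would form the difference $\Delta(x):=\phi(G_{k+1,m-1},x)-\phi(G_{k,m},x)$ and simplify it using the path identity $p_ap_b-p_{a+1}p_{b-1}=p_{a-b}$ (easily verified in the sine parametrisation $p_t=\sin((t+1)\theta)/\sin\theta$, $x=2\cos\theta$). Applying it to each cross term collapses everything and leaves the single product
\[
\Delta(x)=p_{k-m}(x)\,\bigl(x\,\mu-\Phi\bigr).
\]
The factor $x\mu-\Phi$ is precisely the remainder in the vertex-deletion (Schwenk) expansion at $v$, namely $x\,\phi(G-v,x)-\phi(G,x)=\sum_{u\in N(v)}\phi(G-u-v,x)+2\sum_{Z\ni v}\phi(G-V(Z),x)$, a sum over the neighbours of $v$ and the cycles through $v$.

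To conclude, put $\rho_1=\rho(G_{k,m})$ and show $\Delta(x)>0$ for every $x\ge\rho_1$. Since $G$ and $P_{k-m}$ are proper subgraphs of $G_{k,m}$, Lemma~\ref{lemma3} yields $\rho_1>\rho(G)\ge\rho(G-u-v),\rho(G-V(Z))$ and $\rho_1>\rho(P_{k-m})$ (with $p_0\equiv1$ when $k=m$); hence every characteristic polynomial occurring above is strictly positive at any $x\ge\rho_1$, so $p_{k-m}(x)>0$ and $x\mu-\Phi>0$ there. The latter sum is nonempty because $G$ is connected on at least two vertices, so $N(v)\ne\emptyset$. As $\phi(G_{k,m},x)\ge0$ on $[\rho_1,\infty)$ with equality only at $\rho_1$, we obtain $\phi(G_{k+1,m-1},x)=\phi(G_{k,m},x)+\Delta(x)>0$ for all $x\ge\rho_1$, so $\phi(G_{k+1,m-1},\cdot)$ has no root in $[\rho_1,\infty)$ and therefore $\rho(G_{k+1,m-1})<\rho_1=\rho(G_{k,m})$.

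The main obstacle is the algebraic reduction of $\Delta$ to the clean factor $p_{k-m}(x)(x\mu-\Phi)$; once that factorization is in hand the sign analysis is routine. I would explicitly avoid the tempting shortcut of transplanting the Perron vector of $G_{k+1,m-1}$ onto $G_{k,m}$: that approach forces the pointwise comparison $x_{u_{m-1}}>x_{v_k}$ of Perron entries, which can actually fail when $\rho$ is close to $2$, so the polynomial method is the safer route.
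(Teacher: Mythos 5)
Your proposal cannot be checked against an internal argument for the simple reason that the paper gives none: Lemma \ref{lemma2} is quoted from Li and Feng \cite{0} and used as a black box. Judged on its own merits, your proof is correct, and it is in substance the classical characteristic-polynomial proof of the Li--Feng lemma. The two-path formula $\phi(G_{a,b},x)=\Phi p_ap_b-\mu(p_{a-1}p_b+p_ap_{b-1})$ follows exactly as you say (attach the second path to $G_{a,0}$ and use $\phi(G_{a,0}-v,x)=\mu p_a$); the identity $p_ap_b-p_{a+1}p_{b-1}=p_{a-b}$, applied once to the $\Phi$-term and twice to the $\mu$-term, together with the recursion $p_{t-1}+p_{t+1}=xp_t$, does collapse the difference to $\Delta(x)=p_{k-m}(x)\left(x\mu-\Phi\right)$; and Schwenk's vertex expansion writes $x\mu-\Phi$ as $\sum_{u\in N(v)}\phi(G-u-v,x)+2\sum_{Z\ni v}\phi(G-V(Z),x)$, each term of which is positive on $[\rho_1,\infty)$ because the corresponding graphs are proper subgraphs of the connected graph $G_{k,m}$ (Lemma \ref{lemma3}), with the conventions $\phi$ of the empty graph $=1$ and $p_0=1$ covering the degenerate cases $G\cong K_2$ and $k=m$. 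Since $\phi(G_{k,m},\cdot)\ge0$ on $[\rho_1,\infty)$, positivity of $\Delta$ there indeed forces $\rho(G_{k+1,m-1})<\rho_1$. One step of yours deserves emphasis rather than apology: the requirement $N(v)\ne\emptyset$, i.e.\ $|V(G)|\ge 2$, is genuinely necessary, since for $G=K_1$ one has $G_{k,m}\cong G_{k+1,m-1}\cong P_{k+m+1}$ and the strict inequality fails; your proof correctly localizes a nontriviality hypothesis that the paper's statement leaves implicit (as is the requirement $m\ge1$, needed for $G_{k+1,m-1}$ to be defined at all). Your closing claim that the Perron-vector transplant argument can break down is an aside and not load-bearing; nothing in your argument depends on it.
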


Let $v$ be a vertex in a graph $G$. We denote by $N_G(v)$ the set of neighbors of $v$ in $G$, which will also be abbreviated as $N(v)$.

\begin{lem}
\label{lemma13}\cite{WU2005343} Let $G$ be a connected graph with $V(G)=\{1,2,\ldots,n\}$. Suppose $u , v \in V(G)$  are distinct vertices  and $\{v_1,v_2,\ldots,v_s\} \subseteq N_G(v)\setminus N_G(u)$. Suppose $X=(x_1,x_2,\dots,x_n)^T$ is the Perron vector of $G$ with $x_k$   corresponding to the vertex $k$ for $1\le k\le n$. Denote by $G^*=G-\{vv_i|1\le i\le s\}+\{uv_i|1\le i\le s\}$. If $x_u\ge x_v$, then $\rho(G^*)>\rho(G)$. 	
\end{lem}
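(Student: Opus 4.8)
The plan is to use the Rayleigh quotient characterization of the spectral radius together with the strict positivity of the Perron vector. Write $A=A(G)$ and $A^*=A(G^*)$ for the two adjacency matrices, and recall that for any real symmetric matrix $M$ and any unit vector $Y$ one has $\rho(M)\ge Y^TMY$, with the maximum attained exactly on the eigenvectors belonging to the largest eigenvalue (this holds irrespective of whether $G^*$ is connected). Since $X$ is the unit Perron vector of $G$, I would start from the two facts $X^TAX=\rho(G)$ and $x_k>0$ for every $k$, and note that implicitly $s\ge 1$, since otherwise $G^*=G$ and the claim is vacuous.

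First I would compare the two quadratic forms. Because $G^*$ is obtained from $G$ by deleting the edges $vv_i$ and adding the edges $uv_i$ for $1\le i\le s$, the matrices $A$ and $A^*$ differ only in the entries indexed by these pairs, and a direct expansion gives
\begin{equation*}
X^TA^*X-X^TAX=2\sum_{i=1}^{s}x_{v_i}\,(x_u-x_v).
\end{equation*}
Since each $x_{v_i}>0$ and $x_u\ge x_v$ by hypothesis, the right-hand side is nonnegative, so $\rho(G^*)\ge X^TA^*X\ge X^TAX=\rho(G)$. This already settles the strict inequality in the case $x_u>x_v$ with at least one $x_{v_i}>0$, which is automatic.

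The step I expect to require the most care is upgrading the bound to a strict inequality in the borderline case $x_u=x_v$, where the displayed difference vanishes and the crude estimate yields only $\rho(G^*)\ge\rho(G)$. To handle it I would argue by contradiction: suppose $\rho(G^*)=\rho(G)=:\rho$. Then $X$ is a unit vector attaining the Rayleigh maximum for $A^*$, hence $X$ is an eigenvector of $A^*$ with eigenvalue $\rho$, i.e. $A^*X=\rho X$. Reading the coordinate at vertex $v$ of the equation $AX=\rho X$ gives $\rho x_v=\sum_{w\in N_G(v)}x_w$, while the coordinate at $v$ of $A^*X=\rho X$ gives $\rho x_v=\sum_{w\in N_G(v)}x_w-\sum_{i=1}^{s}x_{v_i}$, since $N_{G^*}(v)=N_G(v)\setminus\{v_1,\ldots,v_s\}$. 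Subtracting the two identities forces $\sum_{i=1}^{s}x_{v_i}=0$, contradicting $x_{v_i}>0$ and $s\ge 1$. Hence $\rho(G^*)>\rho(G)$, as claimed. The only subtlety to keep in mind throughout is that $G^*$ need not be connected, but the Rayleigh-maximizer-is-an-eigenvector fact used above is purely a statement about symmetric matrices and does not need connectivity.
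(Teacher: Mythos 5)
Your proof is correct. The paper itself gives no proof of Lemma~\ref{lemma13} (it is quoted from \cite{WU2005343}), and your argument --- the Rayleigh-quotient comparison $X^TA(G^*)X - X^TA(G)X = 2\sum_{i=1}^{s}x_{v_i}(x_u-x_v)\ge 0$, followed by the equality-case contradiction obtained by reading the eigenvalue equation of $A(G^*)$ at a vertex incident to the rotated edges --- is essentially the standard proof from that reference (which works at vertex $u$ rather than $v$, an immaterial difference).
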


\begin{lem}
	\label{lemma1}\cite{Cvetkovi1995} Among all the connected graphs on $n$ vertices, the path $P_n$ has the minimum spectral radius; $\rho(P_n)=2cos\frac{\pi}{n+1}$.
\end{lem}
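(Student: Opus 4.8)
The statement has two logically independent parts: the exact value $\rho(P_n)=2\cos\frac{\pi}{n+1}$, and the minimality of $P_n$ among all connected graphs of order $n$. I would treat them separately, and the minimality part is where the real work lies.

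For the eigenvalue formula the plan is to solve the eigenproblem for the tridiagonal matrix $A(P_n)$ directly. Labelling the vertices $1,2,\dots,n$ along the path, the eigenvalue equation $A(P_n)x=\lambda x$ reads $x_{i-1}+x_{i+1}=\lambda x_i$ for $1\le i\le n$, under the boundary convention $x_0=x_{n+1}=0$. Writing $\lambda=2\cos\theta$ and trying $x_i=\sin(i\theta)$, the recurrence is satisfied by the identity $\sin((i-1)\theta)+\sin((i+1)\theta)=2\cos\theta\,\sin(i\theta)$, and $x_0=0$ holds automatically. The remaining boundary condition $x_{n+1}=\sin((n+1)\theta)=0$ forces $\theta=\frac{k\pi}{n+1}$ for $k=1,\dots,n$, producing the $n$ distinct eigenvalues $2\cos\frac{k\pi}{n+1}$. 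The largest occurs at $k=1$, giving $\rho(P_n)=2\cos\frac{\pi}{n+1}$. This part is routine.

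For minimality I would first reduce to trees. Any connected graph $G$ of order $n$ contains a spanning tree $T$, and if $G\ne T$ then $T$ is a proper subgraph of $G$, so $\rho(G)>\rho(T)$ by Lemma~\ref{lemma3}. Hence it suffices to prove the strict inequality $\rho(T)>\rho(P_n)$ for every tree $T\not\cong P_n$ of order $n$; the connected case then follows, with $P_n$ as the unique minimizer.

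For trees I would use a straightening procedure driven by Lemma~\ref{lemma2}, inducting on the number of branch vertices. If $T\not\cong P_n$ then $T$ has a branch vertex, hence an end branch vertex $v$; by definition all but at most one of the components of $T-v$ contain no branch vertex, so since $\deg(v)\ge 3$ there are at least two branch paths attached to $v$, of lengths $k\ge m\ge 1$. Viewing $T$ as the graph $G_{k,m}$ obtained from the remaining subtree $G$ (which is connected and retains $v$) by attaching these two paths, Lemma~\ref{lemma2} gives $\rho(T)=\rho(G_{k,m})>\rho(G_{k+1,m-1})$. Iterating the shift from $(k,m)$ through $(k+1,m-1),\dots,(k+m,0)$ merges the two branch paths into a single longer one and lowers $\deg(v)$ by one, strictly decreasing $\rho$ at each step; repeating at $v$ and then at the remaining branch vertices eventually yields $P_n$, so $\rho(T)>\rho(P_n)$. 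The hard part will be the combinatorial bookkeeping of this induction: one must verify that an end branch vertex always exists when $T$ is not a path, that the hypotheses of Lemma~\ref{lemma2} genuinely hold at each stage (the two objects being shifted are honest pendant paths at a common vertex, with the rest of the tree playing the role of $G$), and that the procedure terminates at $P_n$ rather than stalling at some other tree. The spectral inequalities themselves are supplied for free by Lemmas~\ref{lemma3} and~\ref{lemma2}.
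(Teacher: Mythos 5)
Your proof is correct, but there is nothing in the paper to compare it against: the paper imports this lemma without proof from Cvetkovi\'{c}--Doob--Sachs \cite{Cvetkovi1995}, treating both the formula and the minimality as known classical facts. What you have done is reconstruct the classical argument using the paper's own toolkit, and the reconstruction is sound. The sine ansatz $x_i=\sin(i\theta)$, $\lambda=2\cos\theta$, with boundary condition $\sin((n+1)\theta)=0$, is the standard derivation of the path spectrum. The minimality part --- spanning-tree reduction via Lemma~\ref{lemma3}, then straightening a non-path tree by repeated application of Lemma~\ref{lemma2} at an end branch vertex --- is precisely the classical Li--Feng-type argument, and the bookkeeping worries you flag all resolve affirmatively: an end branch vertex exists whenever $T$ has a branch vertex (take a leaf of the minimal subtree spanning all branch vertices); each branchless component of $T-v$ is genuinely a pendant path attached at one of its ends, since attachment at an interior vertex would put a vertex of degree $3$ inside that component; and the residual graph $G$ always contains an edge, because $\deg(v)\ge 3$ means a third neighbour of $v$ survives the removal of the two shifted paths. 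This last point deserves emphasis: if $G$ were the single vertex $v$, then $G_{k,m}$ and $G_{k+1,m-1}$ would be the same path and the strict inequality of Lemma~\ref{lemma2} would fail, so checking $|V(G)|\ge 2$ is not pedantry. Termination is clear since each completed merge lowers $\sum_u \bigl(\deg(u)-2\bigr)$ taken over branch vertices. Note finally that your argument yields slightly more than the lemma states --- $P_n$ is the \emph{unique} minimizer --- and this strict version is in fact what the paper implicitly relies on when it concludes in Theorem~\ref{th1}(iii) that the extremal graph must be a path rather than, say, $W_n$, which by Lemma~\ref{lemma8} has the same dissociation number and by Lemma~\ref{lemma6} also has spectral radius below $2$.
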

\begin{lem}
	\label{lemma6}\cite{Smith1970} The only connected graphs on $n$ vertices with spectral radius less than 2 are the path $P_n$ and
$T_n$, with additional cases $E_6, E_7, E_8$ when $n=6,7,8$, where $W_n, E_6, E_7, E_8$ have the following diagrams.
\end{lem}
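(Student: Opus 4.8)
The plan is to prove this through a forbidden-subgraph characterization built on the subgraph monotonicity of the spectral radius (Lemma \ref{lemma3}). The key observation is that there is a short list of connected graphs of spectral radius exactly $2$ that serve as the minimal obstructions: the cycle $C_n$, the tree $\widetilde{D}_m$ obtained from a path by attaching two pendant edges at \emph{each} end, and the three starlike trees $\widetilde{E}_6=T(2,2,2)$, $\widetilde{E}_7=T(1,3,3)$, $\widetilde{E}_8=T(1,2,5)$, where $T(a,b,c)$ denotes the tree with a single vertex of degree $3$ from which three paths (legs) of lengths $a\le b\le c$ emanate. First I would verify that each of these has $\rho=2$ by exhibiting an explicit positive eigenvector for the eigenvalue $2$: for $C_n$ the all-ones vector works since every vertex has degree $2$; for $\widetilde{E}_6$ the labeling giving the center $3$, the three middle vertices $2$, and the three leaves $1$ satisfies $(A(G)v)_u=2v_u$ at every vertex $u$; and analogous labelings handle $\widetilde{D}_m,\widetilde{E}_7,\widetilde{E}_8$. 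Since each such graph is connected, these positive eigenvectors are Perron vectors, so the spectral radius of each is exactly $2$.

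For the forward direction, suppose $\rho(G)<2$. By Lemma \ref{lemma3}, $G$ cannot contain any of the graphs above as a subgraph, since a proper containment would force $\rho(G)>2$ and equality would force $\rho(G)=2$. I would then extract the structure of $G$ from these exclusions in turn: excluding $C_n$ makes $G$ a tree; excluding the star $K_{1,4}=\widetilde{D}_4$ (which has $\rho=\sqrt{4}=2$) bounds the maximum degree of $G$ by $3$; and excluding the general $\widetilde{D}_m$ forbids two distinct vertices of degree $3$, because the unique path joining two such vertices, together with two pendant edges at each end, would realize a copy of $\widetilde{D}_m$. Hence $G$ has at most one branch vertex and is either a path or a starlike tree $T(a,b,c)$ with $a\le b\le c$. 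In the latter case, excluding $\widetilde{E}_6=T(2,2,2)$ forces $a=1$; excluding $\widetilde{E}_7=T(1,3,3)$ then forces $b\le 2$; and when $b=2$, excluding $\widetilde{E}_8=T(1,2,5)$ forces $c\le 4$. This leaves exactly $(a,b,c)\in\{(1,1,c):c\ge 1\}\cup\{(1,2,2),(1,2,3),(1,2,4)\}$, so $G$ is the path $P_n$, the tree $T_n=T(1,1,n-3)$, or one of $E_6,E_7,E_8$.

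For the reverse direction I would confirm that every graph on this list indeed satisfies $\rho<2$. The path is immediate from Lemma \ref{lemma1}, which gives $\rho(P_n)=2\cos\frac{\pi}{n+1}<2$. Each remaining tree is a \emph{proper} subgraph of one of the obstruction graphs of spectral radius $2$: $T(1,1,c)\subsetneq\widetilde{D}_m$ for a suitable $m$, $E_6=T(1,2,2)\subsetneq\widetilde{E}_6=T(2,2,2)$, $E_7=T(1,2,3)\subsetneq\widetilde{E}_7=T(1,3,3)$, and $E_8=T(1,2,4)\subsetneq\widetilde{E}_8=T(1,2,5)$, obtained by lengthening one leg by a single edge. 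Lemma \ref{lemma3} then yields $\rho<2$ in each case, and combining the two directions gives the stated characterization.

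The main obstacle I anticipate is the structural case analysis in the forward direction: one must argue that the graphs $C_n,\widetilde{D}_m,\widetilde{E}_6,\widetilde{E}_7,\widetilde{E}_8$ form a \emph{complete} family of minimal obstructions and then translate each exclusion into a sharp degree- or leg-length constraint without omitting a case. In particular, ruling out a second branch vertex via $\widetilde{D}_m$ and the bookkeeping on $(a,b,c)$ against $\widetilde{E}_6,\widetilde{E}_7,\widetilde{E}_8$ require care, since one must verify that every starlike tree off the final list genuinely contains one of these three as a subgraph. By contrast, checking the eigenvectors for $\rho=2$ and the properness of the containments is routine once Lemma \ref{lemma3} is in hand.
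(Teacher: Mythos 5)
Your proposal is correct, but there is nothing in the paper to compare it against: the paper does not prove this statement at all. It is quoted as Smith's classical theorem with a citation to Smith (1970), and the neighbouring Lemma \ref{lemma7} records the companion classification of connected graphs with spectral radius exactly $2$. What you have written is essentially the standard proof of Smith's theorem, and it is sound. You verify by explicit positive eigenvectors that the obstruction graphs $C_k$, $\widetilde{D}_m$ (including the degenerate case $K_{1,4}$ with $\rho=2$), $\widetilde{E}_6=T(2,2,2)$, $\widetilde{E}_7=T(1,3,3)$ and $\widetilde{E}_8=T(1,2,5)$ all have spectral radius exactly $2$; monotonicity (Lemma \ref{lemma3}) then excludes them as subgraphs of any connected $G$ with $\rho(G)<2$, and the resulting constraints --- acyclic, maximum degree at most $3$, at most one branch vertex, and the leg-length bounds on $T(a,b,c)$ --- do pin down exactly $P_n$, $W_n=T(1,1,n-3)$ (the ``$T_n$'' in the statement is a typo for $W_n$), $E_6=T(1,2,2)$, $E_7=T(1,2,3)$ and $E_8=T(1,2,4)$, matching the paper's diagrams; the converse via proper containment in the $\rho=2$ graphs is also right. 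Two small points of care. First, Lemma \ref{lemma3} as stated omits the hypothesis that the larger graph be connected, which is what gives strict inequality; in every application you make, the larger graph is connected, so nothing breaks. Second, the worry in your closing paragraph is unfounded: your argument nowhere needs the obstruction family to be a \emph{complete} list of the graphs with $\rho=2$ (that stronger fact is Lemma \ref{lemma7}); it only needs that each named obstruction has $\rho=2$ and that excluding them as subgraphs forces the claimed structure, both of which you establish. In short, you have supplied a self-contained proof of a result the paper imports as a black box.
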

\begin{figure}[H]
\begin{subfigure}{0.4\textwidth}
\centering
		\begin{tikzpicture}
		\draw(0,0)[fill]circle[radius=1.0mm]--(1,0)[fill]circle[radius=1.0mm];
		\draw(1,0)[fill]circle[radius=1.0mm]--(2,0)[fill]circle[radius=1.0mm];
		\draw(1,0)[fill]circle[radius=1.0mm]--(1,1)[fill]circle[radius=1.0mm];
		\node at (2.5,0){$\cdots$};
		\draw(3,0)[fill]circle[radius=1.0mm];
		\draw(3,0)--(4,0)[fill]circle[radius=1.0mm];
		\end{tikzpicture}
		\caption*{$W_n$ }
		\label{fig:1}
	\end{subfigure}
	\begin{subfigure}{0.6\textwidth}
		\centering
		\begin{tikzpicture}
		\draw(0,0)[fill]circle[radius=1.0mm]--(0.8,0)[fill]circle[radius=1.0mm];
		\draw(0.8,0)[fill]circle[radius=1.0mm]--(1.6,0)[fill]circle[radius=1.0mm];	
		\draw(1.6,0)[fill]circle[radius=1.0mm]--(2.4,0)[fill]circle[radius=1.0mm];
		\draw(2.4,0)[fill]circle[radius=1.0mm]--(3.2,0)[fill]circle[radius=1.0mm];
		\draw(1.6,0.8)[fill]circle[radius=1.0mm]--(1.6,0)[fill]circle[radius=1.0mm];
		\end{tikzpicture}
		\caption*{$E_6$ }
		\label{fig:sub2.1}
	\end{subfigure}
	\begin{subfigure}{0.4\textwidth}
		\centering
		\begin{tikzpicture}
		\draw(0,0)[fill]circle[radius=1.0mm]--(0.8,0)[fill]circle[radius=1.0mm];
		\draw(0.8,0)[fill]circle[radius=1.0mm]--(1.6,0)[fill]circle[radius=1.0mm];	
		\draw(1.6,0)[fill]circle[radius=1.0mm]--(2.4,0)[fill]circle[radius=1.0mm];
		\draw(2.4,0)[fill]circle[radius=1.0mm]--(3.2,0)[fill]circle[radius=1.0mm];
		\draw(3.2,0)[fill]circle[radius=1.0mm]--(4,0)[fill]circle[radius=1.0mm];
		\draw(1.6,0.8)[fill]circle[radius=1.0mm]--(1.6,0)[fill]circle[radius=1.0mm];
		\end{tikzpicture}
		\caption*{$E_7$}
		\label{fig:sub2.2}
	\end{subfigure}
	\begin{subfigure}{0.6\textwidth}
		\centering
		\begin{tikzpicture}
		\draw(0,0)[fill]circle[radius=1.0mm]--(0.8,0)[fill]circle[radius=1.0mm];
		\draw(0.8,0)[fill]circle[radius=1.0mm]--(1.6,0)[fill]circle[radius=1.0mm];	
		\draw(1.6,0)[fill]circle[radius=1.0mm]--(2.4,0)[fill]circle[radius=1.0mm];
		\draw(2.4,0)[fill]circle[radius=1.0mm]--(3.2,0)[fill]circle[radius=1.0mm];
		\draw(3.2,0)[fill]circle[radius=1.0mm]--(4,0)[fill]circle[radius=1.0mm];
		\draw(4,0)[fill]circle[radius=1.0mm]--(4.8,0)[fill]circle[radius=1.0mm];
		\draw(1.6,0.8)[fill]circle[radius=1.0mm]--(1.6,0)[fill]circle[radius=1.0mm];
		\end{tikzpicture}
		\caption*{$E_8$ }
		\label{fig:sub2.3}
	\end{subfigure}
\caption{The graphs $W_n$, $E_6,~E_7,~E_8$ }
	\label{fig:2}
\end{figure}
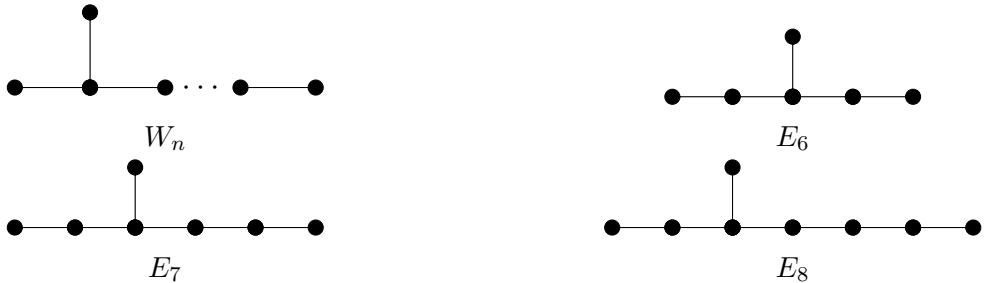

\begin{lem}
	\label{lemma7}\cite{Smith1970} The only connected graphs on $n$ vertices with spectral radius 2 are the cycle   $C_n$ and  $\tilde{W}_{n}$, with additional cases $\tilde{E}_6, \tilde{E}_7, \tilde{E}_8$ when $n=6,7,8$, where $\tilde{W}_{n}, \tilde{E}_6, \tilde{E}_7$ and $ \tilde{E}_8$ have the following diagrams.
\end{lem}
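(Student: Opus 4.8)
The plan is to prove the two directions separately: that every graph in the list has spectral radius exactly $2$, and that these are the only connected graphs with this property. For the first direction I would, for each candidate, exhibit an explicit positive eigenvector of $A(G)$ for the eigenvalue $2$. For the cycle $C_n$ the all-ones vector works, since each vertex has exactly two neighbours. For $\tilde{W}_n$ one labels the two pendant vertices at each end by $1$ and every vertex of the spine (including the two branch vertices) by $2$; for $\tilde{E}_6,\tilde{E}_7,\tilde{E}_8$ one uses the standard symmetric labelings (for instance $\tilde{E}_6$ with centre $3$ and the three arms labeled $2,1$). In each case a one-line check gives $A(G)x=2x$ with $x>0$, and since $G$ is connected the Perron--Frobenius theorem forces $\rho(G)=2$, because the spectral radius is the unique eigenvalue admitting a positive eigenvector.

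For the converse, suppose $G$ is connected with $\rho(G)=2$. The key structural device is Lemma~\ref{lemma3}: every proper subgraph $H\subsetneq G$ has $\rho(H)<\rho(G)=2$, so by Lemma~\ref{lemma6} each component of $H$ is a path, a $W_m$, or one of $E_6,E_7,E_8$. First I handle the cyclic case: if $G$ contains a cycle, it contains some $C_m$ as a subgraph, and since $\rho(C_m)=2$ while strict monotonicity forbids $C_m$ from being a \emph{proper} subgraph of a graph with spectral radius $2$, we conclude $G=C_m$. Hence we may assume $G$ is a tree. A vertex of degree $\ge 5$ would give $\rho(G)\ge\rho(K_{1,5})=\sqrt5>2$, and a vertex of degree $4$ yields $K_{1,4}\subseteq G$ with $\rho(K_{1,4})=2$, forcing $G\cong K_{1,4}=\tilde{W}_4$; so we may assume the maximum degree of $G$ is at most $3$ and all branch vertices have degree $3$.

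It remains to analyse trees of maximum degree $3$ by the number of branch vertices, using the pincer between Lemma~\ref{lemma3} (proper subgraphs have $\rho<2$, hence lie in the finite list of Lemma~\ref{lemma6}) and the hypothesis $\rho(G)=2$ (so $G$ is not in that finite list). If $G$ has a single branch vertex it is a spider $T_{a,b,c}$ with arms $a\le b\le c$; the triples with $a=b=1$ are the $W_m$ (with $\rho<2$), the small triples $(1,2,2),(1,2,3),(1,2,4)$ are $E_6,E_7,E_8$ (again $\rho<2$), and any triple properly containing $T_{2,2,2}$, $T_{1,3,3}$, or $T_{1,2,5}$ has $\rho>2$ by Lemma~\ref{lemma3}, so the only spiders with $\rho=2$ are exactly $T_{2,2,2}=\tilde{E}_6$, $T_{1,3,3}=\tilde{E}_7$, and $T_{1,2,5}=\tilde{E}_8$. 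If $G$ has two or more branch vertices, I would truncate the two outermost forks to pendant leaves and prune any extra arm; this produces a $\tilde{W}_m$ (which has $\rho=2$) that is a proper subgraph of $G$ unless $G$ was already a clean double fork, whence $\rho(G)>2$ by Lemma~\ref{lemma3} in every case except $G\cong\tilde{W}_n$. In particular three or more branch vertices, or any off-spine arm of length $\ge 2$, always yields such a proper $\tilde{W}_m$ and is excluded.

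The main obstacle is precisely this tree case: organising the argument so that the two opposing constraints — every proper subgraph is a finite Dynkin-type graph, yet $G$ itself is not — collapse to exactly the affine diagrams. The delicate points are verifying the boundary arm lengths (why $T_{1,2,5}$ is admissible but $T_{1,2,6}$ is not, which rests entirely on the strict inequality of Lemma~\ref{lemma3} applied to the containment $\tilde{E}_8\subsetneq T_{1,2,6}$), and checking the small orders $n=6,7,8$ where the exceptional graphs $\tilde{E}_6,\tilde{E}_7,\tilde{E}_8$ must be separated from $\tilde{W}_n$ of the same order.
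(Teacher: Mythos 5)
Your proposal is correct, but it is not comparable to a proof in the paper for the simple reason that the paper offers none: Lemma~\ref{lemma7}, like Lemma~\ref{lemma6}, is quoted from \cite{Smith1970} without proof. What you have done is reconstruct Smith's classification from ingredients the paper already has on hand, and the reconstruction is sound: the forward direction (each listed graph has $\rho=2$) via explicit positive eigenvectors and the Perron--Frobenius fact that only the Perron root of a connected graph admits a positive eigenvector; the converse via the pincer between Lemma~\ref{lemma3} (every proper subgraph of the connected graph $G$ has $\rho<2$, hence lies in the Lemma~\ref{lemma6} list) and the hypothesis $\rho(G)=2$ (so $G$ itself does not), split into the cycle case, the maximum-degree bound, the spider trichotomy, and the two-branch-vertex case. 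What your route buys is self-containedness: given Lemma~\ref{lemma6}, the $\rho=2$ classification follows mechanically from subgraph monotonicity, which is worth knowing; what the citation buys is brevity, which is why the paper takes it. Two small points if you were to write yours out in full. First, an indexing slip: $K_{1,4}$ has five vertices, so in the paper's convention it is the degenerate $\tilde{W}_5$ (the two degree-3 vertices of the double fork merged into one degree-4 vertex); since the paper's diagram of $\tilde{W}_n$ shows two distinct branch vertices, this degenerate case deserves explicit mention rather than being silently absorbed. Second, your two-branch-vertex step can be tightened so that no ``truncating/pruning'' language is needed: for any two branch vertices $u,v$ of a tree of maximum degree~3, the path from $u$ to $v$ together with the two off-path edges at $u$ and the two off-path edges at $v$ is a $\tilde{W}_m$ subgraph of $G$ (no coincidences among these vertices, as $G$ is acyclic); by Lemma~\ref{lemma3} and $\rho(\tilde{W}_m)=2$ this subgraph cannot be proper, so $G\cong\tilde{W}_n$, which disposes of three or more branch vertices and long off-spine arms in the same stroke.
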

\begin{figure}[H]
	\centering
	\begin{subfigure}{0.45\textwidth}
		\vspace{15pt}
		\centering
		\begin{tikzpicture}
		\draw(0,0)[fill]circle[radius=1.0mm]--(1,0)[fill]circle[radius=1.0mm];
		\draw(1,0)[fill]circle[radius=1.0mm]--(2,0)[fill]circle[radius=1.0mm];
		\draw(3,0)[fill]circle[radius=1.0mm]--(4,0)[fill]circle[radius=1.0mm];
		\draw(4,0)[fill]circle[radius=1.0mm]--(5,0)[fill]circle[radius=1.0mm];
		\draw(1,1)[fill]circle[radius=1.0mm]--(1,0)[fill]circle[radius=1.0mm];
		\draw(4,1)[fill]circle[radius=1.0mm]--(4,0)[fill]circle[radius=1.0mm];
		\node at (2.5,0){$\cdots$};
		\end{tikzpicture}
		\caption*{$\tilde{W}_{n}$}
		\label{fig:3}
	\end{subfigure}
	\begin{subfigure}{0.45\textwidth}
		\centering
		\begin{tikzpicture}
		\foreach \y in {0}{
			\foreach \x in {0,0.8,1.6,2.4}{
				\draw (\x, \y)[fill]circle[radius=1.0mm]--(\x+0.8,\y);
				
			}
		};
		\draw(3.2,0)[fill]circle[radius=1.0mm];
		\draw(1.6,0.8)[fill]circle[radius=1.0mm]--(1.6,0);
		\draw(1.6,1.6)[fill]circle[radius=1.0mm]--(1.6,0.8);
		\end{tikzpicture}
		\caption*{$\tilde{E}_6$}
		\label{fig:4.1}
	\end{subfigure}
	\begin{subfigure}{0.45\textwidth}
		\centering
		\vspace{23pt}
		\begin{tikzpicture}
		\foreach \y in {0}{
			\foreach \x in {0,0.8,1.6,2.4,3.2,4}{
				\draw (\x, \y)[fill]circle[radius=1.0mm]--(\x+0.8,\y);
				
			}
		};
		\draw(4.8,0)[fill]circle[radius=1.0mm];
		\draw(2.4,0.8)[fill]circle[radius=1.0mm]--(2.4,0);
		\end{tikzpicture}
		\caption*{$\tilde{E}_7$}
		\label{fig:4.2}
	\end{subfigure}
	\begin{subfigure}{0.45\textwidth}
		\centering
		\vspace{23pt}
		\begin{tikzpicture}
		\foreach \y in {0}{
			\foreach \x in {0,0.8,1.6,2.4,3.2,4,4.8}{
				\draw (\x, \y)[fill]circle[radius=1.0mm]--(\x+0.8,\y);
				
			}
		};
		\draw(5.6,0)[fill]circle[radius=1.0mm];
		\draw(1.6,0.8)[fill]circle[radius=1.0mm]--(1.6,0);
		\end{tikzpicture}
		\caption*{$\tilde{E}_8$}
		\label{fig:4.3}
	\end{subfigure}
\label{fig:4}
	\caption{The graphs $\tilde{W}_{n}$, $\tilde{E}_6,\tilde{E}_7,\tilde{E}_8$}
\end{figure}
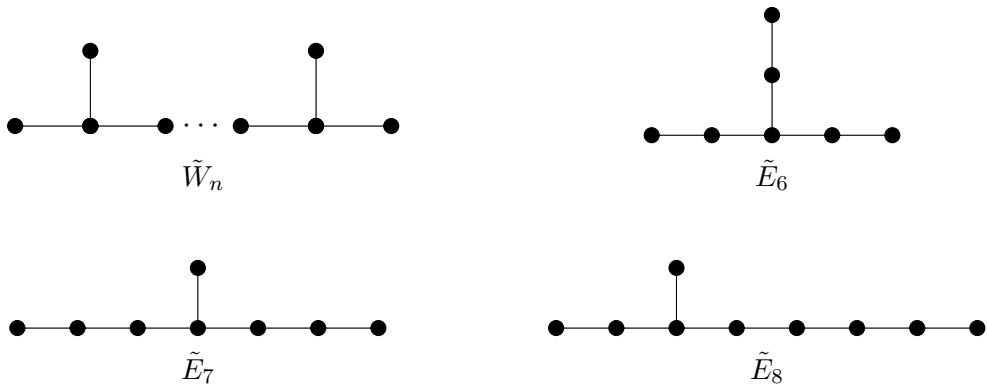
It is clear that $diss(E_6)=5$, $diss(E_7)=5$, $diss(E_8)=6$, $diss(\tilde{E}_6)=6$, $diss(\tilde{E}_7)=6$, $diss(\tilde{E}_8)=7$.

\begin{lem}\label{lemmah13}
 Let $G$ be a connected graph and let $P$ be a 2-path   disjoint with $G$. Suppose $G'=G\cup P+e$ with $e$ being an edge connecting $G$ and $P$. Then $$diss(G')=diss(G)+2.$$
 \end{lem}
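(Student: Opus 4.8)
The plan is to prove the two inequalities $diss(G') \ge diss(G) + 2$ and $diss(G') \le diss(G) + 2$ separately. Write the $2$-path as $P = u_1 u_2 u_3$ with $u_2$ its central vertex, and let the connecting edge be $e = wu$ with $w \in V(G)$ and $u \in V(P)$. The structural fact I use throughout is that, because $e$ runs between $V(G)$ and $V(P)$, the only edge of $G'$ joining the two parts is $e$; in particular the edges of $G'$ with both endpoints in $V(G)$ are exactly the edges of $G$.

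For the lower bound I would take a maximum dissociation set $S$ of $G$, so $|S| = diss(G)$, and append the two vertices of $V(P) \setminus \{u\}$. These two vertices have no neighbour in $V(G)$, since the only edge from $P$ to $G$ is $e$ and it is incident with the excluded vertex $u$; moreover they induce either two isolated vertices (when $u = u_2$) or a single edge (when $u$ is an endpoint), so among themselves they have maximum degree at most one. Hence $S \cup (V(P) \setminus \{u\})$ induces a subgraph of maximum degree at most one in $G'$, that is, it is a dissociation set, and its size is $diss(G) + 2$.

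For the upper bound I would start from a maximum dissociation set $S'$ of $G'$ and split it as $S' = S'_G \cup S'_P$ where $S'_G = S' \cap V(G)$ and $S'_P = S' \cap V(P)$. First, $|S'_P| \le 2$: if all three vertices $u_1, u_2, u_3$ lay in $S'$, then $u_2$ would have degree $2$ in $G'[S']$, contradicting the dissociation property. Second, by the structural fact above $G[S'_G] = G'[S'_G]$, and the latter is an induced subgraph of $G'[S']$; since $G'[S']$ has maximum degree at most one, so does $G[S'_G]$, so $S'_G$ is a dissociation set of $G$ and $|S'_G| \le diss(G)$. Combining these, $diss(G') = |S'_G| + |S'_P| \le diss(G) + 2$, which together with the lower bound gives the claim.

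No step presents a genuine difficulty; the only place that rewards attention is the lower bound, where one must confirm that the two appended vertices neither attach to $G$ nor create a vertex of degree two, and this is exactly why they are taken to be the two vertices of $P$ other than the attachment point $u$.
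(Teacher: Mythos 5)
Your proposal is correct and follows essentially the same route as the paper's own proof: the upper bound via the observation that a dissociation set of $G'$ contains at most two vertices of $P$ (and that its restriction to $V(G)$ is a dissociation set of $G$), and the lower bound by adjoining the two vertices of $V(P)\setminus\{u\}$ to a maximum dissociation set of $G$. Your write-up merely makes explicit some details the paper leaves implicit (the structural fact that $e$ is the only edge between the parts, and the two cases for the location of $u$ on $P$), which is fine.
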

 \begin{proof}
  Note that a maximum dissociation set of $G'$ contains at most two vertices from $P$. we have $diss(G')\le diss(G)+2$. On the other hand, suppose $S$ is a maximum dissociation set of $G$ and $v$ is the vertex in $V(P)$ incident with $e$. Then $S\cup \left(V(P)\setminus\{v\}\right)$ is a  dissociation set of $G'$. Therefore, we have $diss(G')\ge diss(G)+2$, and hence $diss(G')= diss(G)+2$.
 \end{proof}

\begin{lem}
	\label{lemma8}
Let $n\ge 3$ be an integer. Then   $$diss(P_n)=diss(W_n)=diss(\tilde{W}_{n})=\left\lceil\frac{2n}{3}\right\rceil\quad  and \quad diss(C_n)= \left\lfloor\frac{2n}{3} \right\rfloor.$$
\end{lem}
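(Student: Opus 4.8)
The plan is to handle the three trees $P_n$, $W_n$, $\tilde W_n$ by repeatedly stripping off an induced $P_3$ and applying Lemma~\ref{lemmah13}, while the cycle $C_n$ is treated by a direct counting argument. The fact that drives all four computations is this: a vertex set $S$ of a path or a cycle induces a subgraph of maximum degree at most $1$ precisely when $S$ contains no three consecutive (for a cycle, cyclically consecutive) vertices, because a vertex of $S$ acquires degree $2$ exactly when both of its neighbours also lie in $S$. I will use this both to settle the small base cases and to bound the cycle.

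For the paths, for $n\ge 6$ I write $P_n=P_{n-3}\cup P_3+e$, taking $P_3$ (a $2$-path) to be the last three vertices and $e$ the edge joining them to the rest; Lemma~\ref{lemmah13} gives $diss(P_n)=diss(P_{n-3})+2$. Since $diss(P_3)=2$, $diss(P_4)=3$, $diss(P_5)=4$ each equal $\lceil 2n/3\rceil$ for the corresponding $n$, induction yields $diss(P_n)=\lceil 2n/3\rceil$. The graph $W_n$ is a path $v_1v_2\cdots v_{n-1}$ with one extra leaf attached at $v_2$; deleting the three vertices at the far end of this path exhibits $W_n=W_{n-3}\cup P_3+e$, so $diss(W_n)=diss(W_{n-3})+2$, and the base cases $W_4,W_5,W_6$ are checked directly to equal $\lceil 2n/3\rceil$. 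For $\tilde W_n$ the tempting move of lengthening the central spine violates the hypothesis of Lemma~\ref{lemmah13}, since attaching a $P_3$ to an interior vertex creates a spurious branch vertex. Instead I peel off one whole claw at an end: its two leaves together with the branch vertex form an induced $P_3$ joined to the rest of $\tilde W_n$ by a single edge, whose removal leaves exactly a copy of $W_{n-3}$. Hence $diss(\tilde W_n)=diss(W_{n-3})+2=\lceil 2n/3\rceil$ (with the smallest case checked by hand).

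For the cycle I argue from scratch. Let $S$ be a maximum dissociation set of $C_n$ and let $T=V(C_n)\setminus S$, $t=|T|$. Since $n\ge 3$, the set $V(C_n)$ is itself not a dissociation set, so $t\ge 1$; the $t$ vertices of $T$ then split the cycle into $t$ arcs of consecutive $S$-vertices, each of length at most $2$ by the forbidden-triple description. Thus $|S|\le 2t$, and together with $|S|=n-t$ this forces $n\le 3t$, i.e. $t\ge\lceil n/3\rceil$, so $diss(C_n)=|S|\le n-\lceil n/3\rceil=\lfloor 2n/3\rfloor$. For the reverse inequality I place $\lceil n/3\rceil$ excluded vertices around the cycle as evenly as possible; since the number of remaining vertices is at most twice the number of arcs, every arc can be made to have at most two vertices, giving a dissociation set of size $\lfloor 2n/3\rfloor$. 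Therefore $diss(C_n)=\lfloor 2n/3\rfloor$.

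The base-case verifications and the explicit extremal sets are routine. The one genuinely delicate point among the trees is the decomposition of $\tilde W_n$: one must not try to extend the spine but instead recognise a pendant claw as the $P_3$ to be removed, reducing $\tilde W_n$ to the already-settled $W_{n-3}$. For the cycle, the crux is that closing the path into a cycle glues its two ends, which is exactly what forces at least $\lceil n/3\rceil$ deletions rather than $\lceil n/3\rceil-1$ and thereby replaces the ceiling valid for paths by a floor; keeping this floor-versus-ceiling bookkeeping straight is what I expect to require the most care.
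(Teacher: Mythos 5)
Your proof is correct, and it differs from the paper's in a meaningful way: it is self-contained. The paper disposes of $P_n$ and $C_n$ in one stroke by observing that $S$ is a dissociation set if and only if $V(G)\setminus S$ is a vertex 3-path cover, and then citing Proposition 1.1 of Bre\v{s}ar et al.; you instead prove the path case by a stripping induction (Lemma \ref{lemmah13} plus the base cases $P_3,P_4,P_5$) and the cycle case by a direct count ($t$ deleted vertices cut the cycle into $t$ arcs of at most two kept vertices each, so $n-t\le 2t$ and $t\ge\lceil n/3\rceil$), which cleanly isolates why closing the path into a cycle replaces the ceiling by the floor. What the paper's route buys is brevity; what yours buys is independence from the external reference and an explicit extremal construction. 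For $W_n$ the two arguments use the same key lemma but different decompositions: the paper peels off the claw (the $P_3$ formed by the two leaves together with the branch vertex), so the remainder is $P_{n-3}$ and no induction or extra base cases are needed, whereas you peel the far end of the tail and reduce to $W_{n-3}$, at the cost of checking $W_4,W_5,W_6$ by hand. For $\tilde W_n$ your decomposition (claw plus $W_{n-3}$) is exactly the paper's. One small correction of phrasing: attaching a $P_3$ to an interior spine vertex does not ``violate the hypothesis'' of Lemma \ref{lemmah13} --- that lemma allows the connecting edge to join any vertex of $P$ to any vertex of $G$; the real point is that no single-edge attachment of a $P_3$ to $\tilde W_{n-3}$ can produce $\tilde W_n$ (inserting three vertices into the middle of the spine is a subdivision, not an attachment), which is why the reduction must go to $W_{n-3}$, as you in fact do.
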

\begin{proof}
Note that $S$ is a dissociation set of a graph $G$ if and only if $V(G)\setminus S$ is a vertex 3-path cover of $G$.
By Proposition 1.1 of \cite{BRESAR20131943},   we have $diss(P_n)=\left\lceil {2n}/{3}\right\rceil$ and $diss(C_n)= \left\lfloor {2n}/{3} \right\rfloor.$

Notice that $W_n$ is obtained from $P_{n-3}+P_3$ by adding an edge between the center of $P_3$ and an end of $P_{n-3}$. Applying Lemma \ref{lemma13} we have
$$diss(W_n)=diss(P_{n-3})+2=\left\lceil\frac{2n}{3}\right\rceil.$$
 Similarly, since $\tilde{W}_{n}$ is obtained from $W_{n-3}+P_3$ by adding an edge, we have $diss(\tilde{W}_{n})=\left\lceil {2n}/{3}\right\rceil.$
\end{proof}
An {\it internal path} of a graph $G$ is a sequence of vertices $v_1, v_2,\ldots, v_k$ with $k\ge2$ such that:\\
	 \indent (1) the vertices in the sequence are distinct(except possibly $v_1=v_k$);\\
	 \indent (2) $v_i$ is adjacent to $v_{i+1}(i=1,2,\ldots,k-1)$;\\
	\indent (3) the vertex degrees satisfy $d(v_1)\ge3$, $d(v_2)=\cdots=d(v_{k-1})=2$ (unless $k=2$) and \indent $d(v_k\ge3)$.

Let $uv$ be an edge of a graph $G$. The {\it subdivision} of the edge $uv$ is replacing $uv$ with a 2-path, i.e., deleting $uv$ and  adding  a new vertex $w$ and two new edges $uw$, $wv$. Generally,  the {\it $k$-subdivision} of  $uv$ is replacing $uv$ with a $(k+1)$-path.
We denote by  $G_{uv}^{(k)} $ the graph  obtained from $G$ by doing a $k$-subdivision of $uv$ for $k=1,2,\ldots$, where $G_{uv}^{(1)} $ will be simplified  to be $G_{uv}$.   We  also use   $G_{uv}(w)$  and $G_{uv}^{(k)}(w_1,\ldots,w_k)$ to denote $G_{uv} $ and $G_{uv}^{(k)}$ if we need to emphasize the new added vertices $w$ and $w_1,\ldots,w_k$, respectively.

\begin{lem}
	\label{lemma12}\cite{0} Suppose that $G\ncong \tilde{W}_{n}$ and $uv$ is an edge on an internal path of $G$.  Then $\rho(G_{uv})<\rho(G)$.
\end{lem}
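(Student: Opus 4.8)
The plan is to place both spectral radii in the regime $\rho>2$, to encode the subdivision as the lengthening of an internal path by one unit, and then to extract the strict inequality from the Chebyshev structure of the characteristic polynomial viewed as a function of the path length. First I would reduce to $\rho(G)>2$. An internal path has two endpoints $v_1,v_k$ of degree at least $3$. If $v_1=v_k$, then the internal path is a cycle carrying an extra edge at $v_1$, so $G$ properly contains some $C_\ell$ and Lemma~\ref{lemma3} gives $\rho(G)>\rho(C_\ell)=2$. If $v_1\ne v_k$, then $G$ has at least two distinct vertices of degree $\ge 3$; but each of the graphs listed in Lemmas~\ref{lemma6} and~\ref{lemma7} other than $\tilde W_n$ has at most one vertex of degree $\ge 3$, while $\tilde W_n$ is excluded by hypothesis. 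Hence $\rho(G)\le 2$ is impossible and $\rho(G)>2$; only this fact will be needed below.

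Next I would set up the internal-path family. Let $a,b$ be the endpoints of the internal path containing $uv$, and for $m\ge 0$ let $G^{(m)}$ be obtained from $G$ by replacing this path with one of length $m$ (so $G^{(0)}$ identifies $a$ and $b$, closing the path into a cycle, $G^{(1)}$ joins $a,b$ by a single edge, and in general $m-1$ degree-$2$ vertices lie on the path), everything outside the path being kept fixed. Then $G=G^{(k-1)}$ and $G_{uv}=G^{(k)}$. Writing the adjacency matrix of $G^{(m)}$ with the path vertices as a tridiagonal block and eliminating that block by a Schur complement, the dependence on $m$ enters only through powers of the transfer matrix $\left(\begin{smallmatrix} x & -1\\ 1 & 0\end{smallmatrix}\right)$; consequently $\phi(G^{(m)},x)=a(x)\,U_m(x/2)+b(x)\,U_{m-1}(x/2)$ for fixed polynomials $a,b$, where $U_m$ is the Chebyshev polynomial of the second kind and $\phi$ denotes the characteristic polynomial. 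In particular the three-term recurrence $\phi(G^{(m)},x)=x\,\phi(G^{(m-1)},x)-\phi(G^{(m-2)},x)$ holds.

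Then I would read off the sign. Put $\lambda:=\rho(G)=\rho(G^{(k-1)})>2$ and write $\lambda=\mu+\mu^{-1}$ with $\mu>1$. Solving the recurrence at $x=\lambda$ gives $\phi(G^{(m)},\lambda)=A\mu^{m}+B\mu^{-m}$ for constants $A,B$, and the equation $\phi(G^{(k-1)},\lambda)=0$ forces $B=-A\mu^{2(k-1)}$, whence
$$\phi(G_{uv},\lambda)=\phi(G^{(k)},\lambda)=A\,\mu^{k-2}\bigl(\mu^{2}-1\bigr).$$
Since $\mu>1$, the sign of $\phi(G_{uv},\lambda)$ is the sign of $A$; and because $\phi(G_{uv},x)>0$ for every $x>\rho(G_{uv})$, proving $A>0$ is exactly proving $\rho(G_{uv})<\lambda$, which is the assertion.

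The hard part will be establishing $A>0$. Evaluating at $m=0$ gives $\phi(G^{(0)},\lambda)=A+B=A\bigl(1-\mu^{2(k-1)}\bigr)$, and since $1-\mu^{2(k-1)}<0$ we have $A>0$ if and only if $\phi(G^{(0)},\lambda)<0$, i.e. if and only if $\lambda$ lies strictly between the second-largest and the largest eigenvalue of $G^{(0)}$. Thus the crux reduces to showing $\lambda_2(G^{(0)})<\lambda<\rho(G^{(0)})$: closing the internal path into a cycle should strictly raise the spectral radius, giving $\rho(G^{(0)})>\rho(G)=\lambda$, while the interlacing bound $\lambda_2(G^{(0)})<\lambda$ controls the second eigenvalue. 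The natural tools are the strict monotonicity of $\rho$ under proper subgraphs (Lemma~\ref{lemma3}), applied to the graphs obtained by cutting the internal path, each of which is a proper subgraph of $G$ and so has spectral radius $<\lambda$, together with an interlacing or limiting argument for $\lambda_2(G^{(0)})$; the degenerate configurations (the closed case $v_1=v_k$, and those in which deleting a path vertex leaves $G$ connected through a cycle) would have to be treated separately. I expect the second-eigenvalue control, rather than the reduction, to be the genuine obstacle.
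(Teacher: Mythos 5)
The paper itself offers no proof to compare against: Lemma \ref{lemma12} is imported verbatim from Li--Feng \cite{0}, so your argument has to stand on its own. Your opening reduction to $\rho(G)>2$ (via Lemmas \ref{lemma3}, \ref{lemma6}, \ref{lemma7} and the degree count) is correct. The first genuine gap is the foundational identity. The homogeneous recurrence $\phi(G^{(m)},x)=x\,\phi(G^{(m-1)},x)-\phi(G^{(m-2)},x)$ (equivalently your two-term Chebyshev form) is \emph{false} whenever the ends $a,b$ of the internal path are joined by some path outside it, i.e.\ whenever the internal path lies on a cycle of $G$ --- including your closed case $v_1=v_k$, where the setup $a\neq b$ does not even apply. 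Carrying out the Schur complement properly, with $H$ the graph obtained from $G^{(m)}$ by deleting the interior path vertices and $N(x)$ the $(a,b)$-cofactor of $xI-A(H)$ (nonzero exactly when $H$ contains an $a$--$b$ path), one finds an $m$-independent cross term $-2N(x)$ in $\phi(G^{(m)},x)$, so the correct relation is inhomogeneous:
\begin{equation*}
\phi(G^{(m)},x)=x\,\phi(G^{(m-1)},x)-\phi(G^{(m-2)},x)+2N(x)(x-2).
\end{equation*}
A concrete check: $\phi(C_5,x)-x\,\phi(C_4,x)+\phi(C_3,x)=2(x-2)\neq 0$. The solution then has the form $A\mu^m+B\mu^{-m}-2N(\lambda)$, and your steps ``$\phi(G^{(k-1)},\lambda)=0$ forces $B=-A\mu^{2(k-1)}$'', the sign formula $\phi(G^{(k)},\lambda)=A\mu^{k-2}(\mu^2-1)$, and the evaluation at $G^{(0)}$ all collapse, precisely in the cyclic configurations for which the exclusion of $\tilde{W}_n$ matters. (In this paper the lemma happens to be applied only to trees, where $N\equiv 0$, but the statement you set out to prove is the general one.)

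Second, even in the bridge case where the recurrence is valid, the pivot ``proving $A>0$ is exactly proving $\rho(G_{uv})<\lambda$'' is wrong in the direction you need: $A>0$ yields only $\phi(G_{uv},\lambda)>0$, and a characteristic polynomial is also positive between its second and third largest roots, so $\lambda$ could a priori lie below $\lambda_2(G_{uv})$. This particular hole is repairable: deleting the subdivision vertex $w$ from $G_{uv}$ gives exactly $G-uv$, a proper subgraph of the connected graph $G$, so Cauchy interlacing and Lemma \ref{lemma3} give $\lambda_2(G_{uv})\le\rho(G-uv)<\lambda$, after which $\phi(G_{uv},\lambda)>0$ really does force $\rho(G_{uv})<\lambda$. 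What is not repairable inside your scheme is the crux you explicitly defer, namely $A>0$. In the bridge case one computes, with $H_1,H_2$ the two sides,
\begin{equation*}
(\mu-\mu^{-1})A=\bigl[\phi(H_1,\lambda)-\mu^{-1}\phi(H_1-a,\lambda)\bigr]\bigl[\phi(H_2,\lambda)-\mu^{-1}\phi(H_2-b,\lambda)\bigr],
\end{equation*}
and positivity of these factors says exactly that $\lambda$ exceeds the spectral radii of the limiting graphs in which the internal path is replaced by an infinite ray --- which \emph{is} the monotonicity content of the lemma. Your proposed substitute, $\lambda_2(G^{(0)})<\lambda<\rho(G^{(0)})$, rests on the unproved claim that closing the internal path into a cycle strictly raises the spectral radius, a statement of the same depth as the lemma (and $G^{(0)}$ need not even be a simple graph). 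So the plan is circular at its core; this is why Li--Feng \cite{0} and Hoffman--Smith \cite{Hoffman_1975} argue by a different mechanism, a direct comparison of Perron vectors along the internal path, rather than by tracking characteristic polynomials.
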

	
\begin{lem}
\label{lemma14}	Let $uv$ be an edge of a  tree $T$.   Then
	\begin{eqnarray}
diss(T_{uv})\in\{diss(T),~diss(T)+1\},\label{eqh1} \\
	 diss(T^{(2)}_{uv})\in\{diss(T)+1,~diss(T)+2\}\label{eqh2}
\end{eqnarray}
and
\begin{equation}\label{eqh3}
	 diss(T^{(3)}_{uv})= diss(T)+2.
\end{equation}
\end{lem}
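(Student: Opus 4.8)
The plan is to prove all three statements by the same two–sided strategy: for each subdivision, get the lower bound by converting a maximum dissociation set $S$ of $T$ into a dissociation set of the subdivided tree, and get the upper bound by restricting a maximum dissociation set $S'$ of the subdivided tree back to $V(T)$ and repairing it. Everything rests on one elementary observation peculiar to trees: $uv$ is the \emph{unique} edge of $T$ that disappears under any of these subdivisions, so passing between $T$ and the subdivided tree changes the induced degree only at $u$ and $v$. In particular, whenever both $u,v$ lie in a dissociation set $S$ of $T$, the edge $uv$ already contributes $1$ to each of their degrees, forcing $d_{T[S]}(u)=d_{T[S]}(v)=1$ and hence $u,v$ to have no other neighbour in $S$. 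This is exactly what will make all the repairs cheap. (Note also that \eqref{eqh3} is the analogue of Lemma~\ref{lemmah13} for a pendant $2$-path, but here the internal path $w_1w_2w_3$ is attached by \emph{two} edges, so that lemma does not apply directly and a hands-on argument is needed.)

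For the lower bounds I would start from a maximum dissociation set $S$ of $T$ and note first that $S$ is automatically a dissociation set of $T_{uv}$, since deleting $uv$ only lowers degrees; this gives $diss(T_{uv})\ge diss(T)$. For $T^{(2)}_{uv}$ (new path $uw_1w_2v$) and $T^{(3)}_{uv}$ (new path $uw_1w_2w_3v$) I would additionally insert one, respectively two, of the new internal vertices into $S$, choosing which ones by a short case split on whether $u\in S$ and whether $v\in S$. Using the observation above, when $u,v\in S$ one checks that $S\cup\{w_1\}$ (resp.\ $S\cup\{w_1,w_3\}$) still has maximum degree $1$, because dropping $uv$ frees the degree at $u$ and $v$; the cases where at most one of $u,v$ lies in $S$ are easier, since the free endpoint can absorb a pendant new vertex. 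This yields $diss(T^{(2)}_{uv})\ge diss(T)+1$ and $diss(T^{(3)}_{uv})\ge diss(T)+2$.

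For the upper bounds I would take a maximum dissociation set $S'$ of the subdivided tree and set $S_0=S'\cap V(T)$, discarding exactly the new vertices that $S'$ uses. Then $T[S_0]$ differs from the induced subgraph $T^{(\,\cdot\,)}_{uv}[S_0]$ (which has maximum degree $\le 1$) only by the possible reappearance of $uv$, and this can push a degree above $1$ only when both $u,v\in S_0$; in that event deleting one offending endpoint restores a dissociation set of $T$. For $T_{uv}$ and $T^{(2)}_{uv}$ this costs at most one extra vertex beyond the (at most one, resp.\ two) new vertices dropped, giving $diss(T_{uv})\le diss(T)+1$ and $diss(T^{(2)}_{uv})\le diss(T)+2$. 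The delicate point, and the main obstacle, is the upper bound $diss(T^{(3)}_{uv})\le diss(T)+2$: a naive restriction might drop three new vertices \emph{and then} one endpoint, losing too much. The fix is a counting step on the path $uw_1w_2w_3v$: at most two of $w_1,w_2,w_3$ lie in $S'$, and if two of them lie in $S'$ together with both $u$ and $v$, then these two must be exactly $w_1$ and $w_3$ (any other choice gives some $w_i$ two neighbours in $S'$), which in turn forces $u$ and $v$ to have no $T$-neighbour in $S'$. Hence $S_0=S'\setminus\{w_1,w_3\}$ is \emph{already} a valid dissociation set of $T$ with $u,v$ of degree exactly $1$, needing no further deletion, so $diss(T)\ge |S'|-2$.

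Combining the matching bounds then gives \eqref{eqh1}, \eqref{eqh2}, and the equality \eqref{eqh3}. I expect the routine part to be the enumeration of the small configurations for ``$u,v\in S$'' and ``$u,v\in S'$'', while the genuinely load-bearing step is the $\{w_1,w_3\}$ structural argument that keeps the loss in the $T^{(3)}_{uv}$ upper bound down to exactly two vertices.
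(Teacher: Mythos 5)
Your two-sided strategy (extend a maximum dissociation set of $T$ for the lower bounds; restrict a maximum dissociation set $S'$ of the subdivided tree and repair for the upper bounds) is essentially the paper's, and your treatment of the genuinely delicate point — the upper bound in (\ref{eqh3}), via the observation that if both $u,v$ and two of $w_1,w_2,w_3$ lie in $S'$ then the two must be $w_1,w_3$, which forces $u$ and $v$ to have no $T$-neighbour in $S'$ — is correct and parallels the paper's own case analysis ($\{u,v,w_2\}\subseteq S'$; $\{u,v\}\subseteq S'$ with $w_2\notin S'$; $|\{u,v\}\cap S'|\le 1$). The lower-bound constructions also match the paper's.

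There is, however, a slip in your accounting for the upper bounds in (\ref{eqh1}) and (\ref{eqh2}): ``at most one extra vertex beyond the (at most one, resp.\ two) new vertices dropped'' literally yields losses of at most $2$ and $3$, i.e.\ $diss(T_{uv})\le diss(T)+2$ and $diss(T^{(2)}_{uv})\le diss(T)+3$, which are weaker than the claimed $+1$ and $+2$. What is missing is the statement that the drop and the repair cannot both be maximal: a repair is needed only when $u,v\in S'$, and in that case $w\notin S'$ for $T_{uv}$ (otherwise $w$ would have the two neighbours $u,v$ in $S'$), resp.\ at most one of $w_1,w_2$ lies in $S'$ for $T^{(2)}_{uv}$ (if both did, then $w_1$, with neighbours $u$ and $w_2$ in $S'$, would violate the dissociation property). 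This is exactly the mutual-exclusivity argument you already deploy for (\ref{eqh3}), so the fix is immediate; alternatively, the paper's cleaner device for (\ref{eqh2}) is to delete $S'\cap\{u,w_1,w_2\}$ outright — at most two vertices, since $u,w_1,w_2$ are consecutive on a path — which removes $u$ and hence any conflict with the edge $uv$ in one stroke. With that correction your proof is complete and coincides in substance with the paper's.
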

\begin{proof}
Suppose $T_{uv}=T_{uv}(w)$. It is clear that  $diss(T_{uv})\ge diss(T)$.
 Let $S $ be a maximum dissociation set of $T_{uv}(w)$. If $w\notin S $, then $S \backslash\{u\}$ is a dissociation set of $T$, and thus $diss(T)\ge|S \backslash\{u\}|\ge diss(T_{uv})-1$. If $w \in S $, then at least one of $u$ and $v$ is not in $S $. Thus $S \backslash\{w\}$ is a dissociation set of $T$, which also leads to $diss(T)\ge|S \backslash\{w\}|=diss(T_{uv})-1$. In both cases we have $$diss(T_{uv})\le diss(T)+1.$$  Therefore, we have (\ref{eqh1}).

 Suppose $T_{uv}^{(2)}=T_{uv}^{(2)}(w_1,w_2)$. Let $S$ be a maximum dissociation set of $T$. It is easy to verify $$diss(T_{uv}^{(2)}(w_1,w_2))\ge diss(T)+1,$$
 since we can always add one of $w_1$ and $w_2$ to $S$ to get a dissociation set of $T_{uv}^{(2)}{(w_1,w_2)}$. On the other hand,
  suppose $S'$ is a maximum dissociation of $T_{uv}^{(2)}(w_1,w_2)$.   Then $S'-\{u,w_1,w_2\} $ is a dissociation set of $T$ and
 $$diss(T_{uv}^{(2)}(w_1,w_2))- 2\le |S'-\{u,w_1,w_2\} |\le diss(T).$$
Therefore, we have (\ref{eqh2}).

 Suppose $T_{uv}^{(3)}=T_{uv}^{(3)}(w_1,w_2, w_3)$. Given a  maximum dissociation set $S$ of $T$, if $\{u,v\}\subseteq S$, then $S\cup \{w_1,w_3\}$ is a dissociation set of $T_{uv}^{(3)}(w_1,w_2, w_3)$; if at least one of $u$ and $v$ is not in $S$, say $u\not \in S$, then $S\cup\{w_1,w_2\}$ is a dissociation set of $T_{uv}^{(3)}(w_1,w_2, w_3)$. Therefore, we have
 $$diss(T_{uv}^{(3)}(w_1,w_2, w_3))\ge diss(T)+2.$$
 On the other hand, suppose $S'$ is a maximum dissociation set of $T_{uv}^{(3)}(w_1,w_2, w_3)$. If  $\{u,v,w_2\}\subseteq S'$, then $S'-\{u,w_2\}$ is a dissociation set of $T$, which leads to  $$diss(T_{uv}^{(3)}(w_1,w_2, w_3))-2=|S'|-2=|S'-\{u,w_2\}|\le diss(T).$$
 If  $\{u,v\}\subseteq S'$ and $w_2\not\in S'$, let $N(u)$ and $N(v)$ be the sets of neighbors of $u$ and $v$ in $T_{uv}^{(3)}(w_1,w_2, w_3)$, respectively. Then $S'-N(u)\cup N(v)$ is a dissociation set of $T$, which leads to
  $$diss(T_{uv}^{(3)}(w_1,w_2, w_3))-2=|S'|-2=|S'-N(u)\cup N(v)|\le diss(T),$$
  since  $|N(u)\cap S'|=|N(v)\cap S'|=1$.
 If $|\{u,v\}\cap S'|\le 1$, then $S'-\{w_1,w_2,w_3\}$ is a dissociation set of $T$, which leads to   $$diss(T_{uv}^{(3)}(w_1,w_2, w_3))-2=|S'|-2=|S'-\{w_1,w_2,w_3\}|\le diss(T),$$
 since $|S'\cap \{w_1,w_2,w_3\}|=2$. In all cases we have
 $$diss(T_{uv}^{(3)}(w_1,w_2, w_3))\le diss(T)+2.$$
 Therefore, we have (\ref{eqh3}).
 \end{proof}

\section{The proof of Theorem \ref{th2}}

Denote by  $B(n, s,t)$ the graph of order $n$ obtained from a path $P_{n-s-2t}$ by attaching $s$ leaves and $t$ edges to the same end of  $P_{n-s-2t}$, which has the following diagram.   \begin{figure}[H]
	\centering
	\begin{tikzpicture}
	\draw(0,0)[fill]circle[radius=1.0mm]--(1,0)[fill]circle[radius=1.0mm];
	\draw(2,0)[fill]circle[radius=1.0mm]--(1,0)[fill]circle[radius=1.0mm];
	\draw(3,0)[fill]circle[radius=1.0mm]--(4,0)[fill]circle[radius=1.0mm];
	\draw(5,0)[fill]circle[radius=1.0mm]--(4,0)[fill]circle[radius=1.0mm];
	\draw(5,0)[fill]circle[radius=1.0mm]--(6,-0.5)[fill]circle[radius=1.0mm];
	\draw(7,-1)[fill]circle[radius=1.0mm]--(6,-0.5)[fill]circle[radius=1.0mm];
	\draw(5,0)[fill]circle[radius=1.0mm]--(6,0)[fill]circle[radius=1.0mm];
	\draw(5,0)[fill]circle[radius=1.0mm]--(4.5,0.9)[fill]circle[radius=1.0mm];
	\draw(5,0)[fill]circle[radius=1.0mm]--(5.5,0.9)[fill]circle[radius=1.0mm];
	\node at (2.5,0){$\cdots$};
	\draw(6,0)[fill]circle[radius=1.0mm]--(7,0)[fill]circle[radius=1.0mm];
	\node at (5,0.9){$\cdots$};
	\node at (5,1.5){$s$};
	\node at (7,-0.43){$\vdots$};
	\draw[decorate,decoration={brace,mirror,amplitude=0.2cm}](5.5,0.99)--(4.5,0.99);
	\draw[decorate,decoration={brace,mirror,amplitude=0.2cm}](7.1,-1)--(7.1,0);
	\node at (7.4,-0.5){$t$};
	\end{tikzpicture}
	\label{fig:5.5}
	\caption{The graph $B(n, s,t)$}
\end{figure}
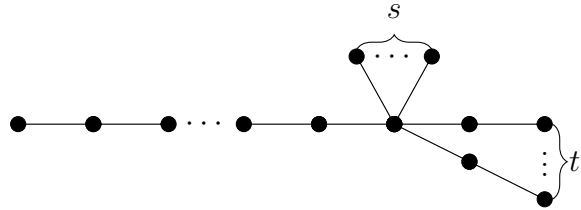
If $s+t=1$, then $B(n, s,t)$ is a path with  dissociation number $\lceil 2n/3\rceil$. If $s+t\ge 2$, since there is a maximum dissociation set of $B(n, s,t)$ that does not contain the unique branch vertex,   we have
 \begin{equation}\label{eqh4}
diss(B(n,s,t))=s+2t+diss(P_{n-s-2t-1})=s+2t+\lceil2(n-s-2t-1)/3\rceil.
\end{equation}

Now we adopt a similar scheme as the proof of Theorem 1.2 in \cite{LOU2022112778} to prove Theorem \ref{th2}.

{\it Proof of Theorem \ref{th2}.}
Suppose $G$ attains the minimum spectral radius in $\mathcal{G}_{n,k}$ and it is not a tree. Then $G$ contains a spanning tree $T$.  Applying Lemma \ref{lemma3}, we have $\rho(T)<\rho(G).$ Since $G$ attains the  minimum spectral radius  in $\mathcal{G}_{n,k}$, we have
 \begin{equation*}\label{eqh5}
	 diss(T)> diss(G)>\left\lceil\frac{2n}{3}\right\rceil.
	\end{equation*}
Thus $T$ is not a path, recalling that $diss(P_n)=\lceil 2n/3\rceil$.
Now we distinguish  two cases.

{\it	Case 1.} $T$ has exactly one branch vertex $u_0$.  Suppose $d_{T}(u_0)=k$ and $P^{(1)},\ldots,P^{(k)}$ are the $k$ branch paths attached to $u_0$. Without loss of generality, we assume $P^{(1)}$ is the longest branch path.   If $P^{(i)}$ has length larger than 1 for some $i\ge 2$, by Lemma \ref{lemma2} and Lemma \ref{lemmah13}, we can cut a $P_3$ from $P^{(i)}$ and attach it to $P^{(1)}$ to obtain a tree with the same dissociation number and smaller spectral radius. Repeating this process until all but one branch paths have length less than or equal to 1, we can find a tree $T'=B(n,s,t)$  such that
\begin{equation}\label{eqh6}
|V(T')|=|V(T)|, \quad diss(T')=diss(T)\quad\text{ and}\quad \rho(T')\le \rho(T).
\end{equation}
Moreover,
since $diss(T')=diss(T)>\lceil 2n/3\rceil+1$, we have $s +2t\ge 6.$

Let $s_1=s,t_1=t$. We construct a tree sequence $T_{i}=B(n,s_i,t_i)$, $i=1,2,\ldots$, as follows:

(i) if $s_i=0$, let $t_{i+1}=t_{i}-2,~ s_{i+1}=s_{i}+1$;

(ii) if $s_i=1$, let $t_{i+1}=t_i-1, ~s_{i+1}=s_i-1$;

(iii) if $s_i\ge 2$, let $t_{i+1}=t_i+1,~ s_{i+1}=s_i-2$.

Then by (\ref{eqh4}) and Lemma \ref{lemma2}, we have
\begin{equation}\label{eqh7}
diss(T_{i+1})\in \{diss(T_i), diss(T_i)-1\}\quad \text{ and}\quad \rho(T_{i+1})<\rho(T_i).
\end{equation}
In the above tree sequence, there exists a tree $T_k=B(n,s_k,t_k)$ such that $diss(T_k)=diss(G)$, since $diss(T_1)>diss(G)$ and $diss(B(n,s,t))<diss(G)$ when $2t+s<6$. On the other hand,  by (\ref{eqh6}) and (\ref{eqh7}), we have $\rho(T_k)<\rho(G)$, which contradicts the assumption on $G$.

 {\it Case 2.}  Suppose $T$ has  at least two branch vertices.   Let $T_1=T$. For $i=1,2,\ldots,$ if $T_i$ has two branch  vertices, we construct the tree $T_{i+1}$ as follows.

 (i) If $T_{i}$ has a branch path $P_k$ with $k\ge 3$,   $T_{i+1}$ is obtained from $T_i$ by doing a  $3$-subdivision of an internal edge, and then  deleting a pendant $P_3$ from the branch path $P_k$.

 (ii) If all branch paths in $T_i$ have lengths 1 or 2, we distinguish two cases. If there is an end branch vertex attached with $s$  leaves $v_1,\ldots,v_s$ and $t$ disjoint edges $u_1w_1,\ldots,u_tw_t$ such that $s+2t\ge 3$,   $T_{i+1}$ is obtained  from $T_i$ by doing  a   subdivision of an internal edge, and then deleting a leaf from $\{v_1,\ldots,v_s,u_1,\ldots,u_t,w_1,\ldots,w_t\}$. If every end branch vertex is only attached with two leaves, $T_{i+1}$ is  obtained  from $T_i$ by doing a 3-subdivision of an internal edge, and then deleting an end branch vertex together with the leaves attached to it.

Suppose $k$ is the smallest integer such that $diss(T_k)=diss(G)$ or $T_k$ has exactly one branch vertex. Then
applying Lemma \ref{lemma14}, we have
$$diss(T_{i+1})\in\{diss(T_i),diss(T_i)-1\} \quad \text{for}\quad i=1,2,\ldots,k-1.$$
Since $diss(T_i)>\lceil{2n/3}\rceil$, we have $T_i\not\cong \tilde{W}_n$ for $i=1,\ldots,k-1$. Applying
Lemma \ref{lemma3} and Lemma \ref{lemma12}, we have
  $$  \rho(T_{i+1})<\rho(T_i)\quad \text{for}\quad i=1,2,\ldots,k-1.$$

  If $diss(T_k)= diss(G)$,  then $\rho(T_{k})<\rho(T_1)<\rho(G)$ contradicts the assumption on $G$. If $diss(T_k)> diss(G)$, then using the same construction as in Case 1, we can always find a tree $T''$ such that $diss(T'')=diss(G)$ and $\rho(T'')<\rho(G)$, which contradicts the assumption on $G$.

Therefore, $G$ is a tree. This completes the proof.\hspace{7cm} $\square$\\

\par

From the above proof we have the following.
\begin{corollary}
Let $ \lceil 2n/3\rceil< k\le n-1$. Suppose a tree $T$ attains the minimum spectral radius in  $\mathcal{G}_{n,k}$. Then either every branch path in $T$ has length  at most one or $T\in B(n,s,t)$ for some positive integers $s,t$.
\end{corollary}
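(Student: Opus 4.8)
My plan is to run the two reduction schemes from the proof of Theorem~\ref{th2} on the tree minimizer $T$ itself, using minimality to force $T$ to be a fixed point of every $diss$-preserving, $\rho$-decreasing move, and to split on the number of branch vertices of $T$. Suppose first that $T$ has a unique branch vertex $u_0$, so that $T$ is a spider with legs $P^{(1)},\dots,P^{(d)}$; let $P^{(1)}$ be a longest leg. If some other leg had length at least two, I would cut a pendant $2$-path off it and graft it onto $P^{(1)}$: by Lemma~\ref{lemmah13} this preserves $diss(T)$ and by Lemma~\ref{lemma2} it strictly lowers $\rho$, contradicting minimality. Hence every leg but $P^{(1)}$ has length at most one, which is exactly $T\cong B(n,s,t)$ with $s$ pendant leaves and $t$ branch edges at $u_0$. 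Applying Lemma~\ref{lemma2} once more with $k=m=1$ (replacing two leaves by one branch edge, a move that fixes $diss$ by~(\ref{eqh4}) and lowers $\rho$) drives $s$ down, and matching $diss(T)=k$ through~(\ref{eqh4}) pins the admissible pair $(s,t)$; this is the $B(n,s,t)$ alternative of the dichotomy, while if $P^{(1)}$ itself had length at most one we would already be in the all-short-branch-path alternative.

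The crux is the case where $T$ has at least two branch vertices, where I must show that every branch path has length at most one. Assume for contradiction that $T$ carries a branch path $P$ of length at least two. Two branch vertices are joined by a path that contains an edge $uv$ lying on an internal path, and since $diss(T)=k>\lceil 2n/3\rceil=diss(\tilde{W}_{n})$ by Lemma~\ref{lemma8} we have $T\not\cong\tilde{W}_{n}$, so Lemma~\ref{lemma12} applies to $uv$. I would then perform the Case~2 move of Theorem~\ref{th2}: a $3$-subdivision of $uv$ together with deletion of a pendant $P_3$ from $P$ when $P$ has length at least three, or the corresponding subdivide-and-delete-a-leaf move when the long branch paths have length exactly two. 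Repeated use of Lemma~\ref{lemma12} together with Lemma~\ref{lemma3} makes $\rho$ strictly decrease, while~(\ref{eqh3}) raises $diss$ by exactly two and the deletion lowers it; if the net change in $diss$ is zero, the resulting tree lies in $\mathcal{G}_{n,k}$ with strictly smaller spectral radius, contradicting the choice of $T$.

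The main obstacle is precisely this dissociation bookkeeping. By~(\ref{eqh3}) the $3$-subdivision always contributes $+2$, but deleting a pendant $P_3$ from a long branch path may lower $diss$ by $2$ or by $3$, so the naive net change is $0$ or $-1$ rather than always $0$. To close the gap I would exploit the flexibility recorded in Lemma~\ref{lemma14}: when the deletion costs $3$, I replace the $3$-subdivision by a $1$- or $2$-subdivision, whose increments range over $\{0,1\}$ and $\{1,2\}$ by~(\ref{eqh1})--(\ref{eqh2}), or I delete a shorter pendant segment, and then select the combination that exactly restores $diss=k$ while Lemma~\ref{lemma12} still forces $\rho$ down. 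I would also check that the internal edge $uv$ and the long branch path $P$ can be chosen vertex-disjoint, so that the two operations do not interfere, and that $diss$ stays above $\lceil 2n/3\rceil$ at each intermediate step so Lemma~\ref{lemma12} keeps applying. The same formula~(\ref{eqh4}) together with the Lemma~\ref{lemma2} moves is what ultimately forces the positivity of $s$ and $t$ in the single-branch-vertex case, and verifying that a balanced move always exists is the one genuinely delicate piece; granting it, the dichotomy follows at once.
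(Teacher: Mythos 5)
Your single-branch-vertex case is handled correctly and is exactly the paper's Case~1 argument (the extra ``drive $s$ down'' step via Lemma~\ref{lemma2} with $k=m=1$ is true but not needed for the statement). The gap is in the two-branch-vertex case, and it sits precisely at the point you defer as ``the one genuinely delicate piece.'' Your premise that deleting a pendant $P_3$ from a long branch path ``may lower $diss$ by $2$ or by $3$'' is false: a pendant $P_3$ is joined to the rest of the tree by a single edge, so Lemma~\ref{lemmah13} --- the very lemma you invoke in your Case~1 --- says its deletion lowers the dissociation number by \emph{exactly} $2$. Combined with (\ref{eqh3}), the move ``$3$-subdivide an internal edge, then delete a pendant $P_3$ from the long branch path'' has net change exactly $0$ in $diss$ and $0$ in order, while Lemmas~\ref{lemma3} and~\ref{lemma12} force $\rho$ strictly down; the contradiction with minimality in $\mathcal{G}_{n,k}$ is immediate, with no bookkeeping left to repair. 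By contrast, the repair you propose cannot be justified from (\ref{eqh1})--(\ref{eqh2}): those only assert that the increments lie in $\{0,1\}$ and $\{1,2\}$, they give you no way to ``select the combination that exactly restores $diss=k$,'' so as written your argument does not close.

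The same misconception produces a second, genuine defect: for branch paths of length exactly two you switch to the ``subdivide once and delete a leaf'' move (the paper's move~(ii), needed in Theorem~\ref{th2} only because there $diss$ must be \emph{decreased}). For that move the net change really can be $-1$, since by (\ref{eqh1}) the $1$-subdivision may contribute $0$; the resulting tree then lies in $\mathcal{G}_{n,k-1}$ and contradicts nothing. The correct move in this subcase is again the clean one: a branch path of length two is itself a pendant $P_3$ (three vertices attached to the branch vertex by one edge), so delete it entirely and pair the deletion with a $3$-subdivision of an internal edge; Lemma~\ref{lemmah13} and (\ref{eqh3}) give net change exactly $0$ and Lemmas~\ref{lemma3},~\ref{lemma12} give $\rho$ strictly smaller. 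Your two side worries are easily dispatched and should be stated rather than left as checks: in a tree, edges of a pendant path never lie on an internal path, and an internal path exists because $T$ has two branch vertices, so the two operations never interfere; and each intermediate subdivided tree still carries a branch path on at least three vertices, hence is not $\tilde{W}_m$, so Lemma~\ref{lemma12} applies at every subdivision step. With these corrections your outline coincides with the paper's intended argument; without them the key case is not proved.
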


 \section{Proof  of Theorem \ref{th1}}
 In this section, we present the proof of Theorem \ref{th1}. We divide the proof into two parts. In the first part we discuss the cases   $k\in\{n-1,~\lceil\frac{2n}{3}\rceil,~\lfloor\frac{2n}{3}\rfloor,~2\}$, and in the second part we discuss the case   $k=n-2$.
 \subsection{The cases  $k\in\{n-1,~\lceil\frac{2n}{3}\rceil,~\lfloor\frac{2n}{3}\rfloor,~2\}$}

Similarly to the proof of Theorem 2.2 and Theorem 2.5 in \cite{XU2009937}, the cases for  $k=\lceil2n/3\rceil$ and $k=\lfloor 2n/3\rfloor$ can be concluded from Lemma \ref{lemma1}, Lemma \ref{lemma6}, Lemma \ref{lemma7} and Lemma \ref{lemma8} directly.

\par
{\it Proof of the case $k=2$.}~~
Suppose $k=2$. 	Since $diss(K_n-e)=2$ for every edge $e$ in $K_n$,  $G$ contains at least two  nonadjacent vertices. Given an arbitrary pair of nonadjacent vertices $u,v$ in $G$,  since $diss(G)=2$, both $u$ and $v$ are adjacent to all the other vertices in $G$.

Suppose $G$ has exactly $k$ disjoint pairs of  nonadjacent vertices $(u_i,v_i),i=1,...,k$. Then $G=\{u_1,v_1\}\vee\{u_2,v_2\}\vee\cdots \vee\{u_k,v_k\}\vee K_{n-2k}$. If $n-2k\ge 2$, then deleting an edge  $uv$ from  $G$ with $u,v\in V(G)\setminus \{u_1,\ldots,u_k,v_1,\ldots,v_k\}$, we get a subgraph $H$ of $G$ with $\rho(H)<\rho(G)$ and $diss(H)=2$, a contradiction. Therefore, $n-2k<2$ and $G$ is  a balanced complete $\lceil n/2\rceil$-partite graph.   {\hspace{12cm} $\square$}

{\it Proof of the case $k=n-1$.}~~
Suppose $k=n-1$.  By Theorem \ref{th2}, $G$ is a tree.	  Let $S$ be a maximum dissociation set of $G$ such that $G[S]$ has exactly $r$ isolated vertices and $t$ disjoint edges with $r+2t=n-1$. Denote  by $v$ the unique vertex in $ V(G)\setminus S$. Since $G$ is a tree, $v$ is adjacent to exactly one end of each edge from $G[S]$, and $v$ is adjacent to all isolated vertices of $G[S]$.
  Therefore, we  have $G\cong S(r,t)$.
 Applying Lemma \ref{lemma2}, we have $G\cong S(0,\frac{n-1}{2})$ when $n$ is odd, and $G\cong S(1,\frac{n-2}{2})$ when $n$ is even.  {\hspace{7.8cm} $\square$}

 \subsection{The case   $k=n-2$}

 Suppose $G$ attains the minimum spectral radius in $\mathcal{G}_{n,n-2}$. If $n=5$, then $n-2=\lfloor 2n/3\rfloor$ and $G\cong C_5$. If $n\in\{6,7,8\}$, $n-2=\lceil 2n/3\rceil$ and $G\cong P_n$. If $n=9$, applying Lemma \ref{lemma7}, we have $G\cong \tilde{E}_8$, since $diss(\tilde{E}_8)=7$ and $diss(P_9)=6$. We present the proof for the case $n\ge 10$ of Theorem \ref{th1} in this subsection.

Denote by  $G_1(r,s,p,q),\ldots,G_4(r,s,p,q)$  the following graphs (see Figure \ref{fig:11}):
\begin{itemize}
\item[(i)] $G_1(r,s,p,q)$: the graph obtained from an edge $v_1v_2$ by attaching $r$  leaves and $s$ disjoint edges  to $v_1$, and attaching $p$  leaves and $q$ disjoint  edges  to $v_2$;
\item[(ii)] $G_2(r,s,p,q)$: the graph obtained from a path $v_1v_3v_2$ by attaching $r$   leaves and $s$ disjoint  edges  to $v_1$, and attaching  $p$   leaves and $q$ disjoint  edges  to $v_2$;
\item[(iii)] $G_3(r,s,p,q)$: the  graph obtained from a path $v_1v_3v_4v_2$ by attaching $r$   leaves and $s$ disjoint  edges  to $v_1$, and attaching $p$  leaves and $q$ disjoint  edges  to $v_2$;
\item[(iv)] $G_4(r,s,p,q)$: the graph obtained from a path $v_1v_3v_2$ by attaching $r$   leaves and $s$ disjoint  edges  to $v_1$,   attaching $p$   leaves and $q$ disjoint  edges  to $v_2$ and attaching a  vertex $v_4$ to $v_3$.
\end{itemize}
	\begin{figure}[H]
	\centering
	\begin{subfigure}{.51\textwidth}
		\centering
		\begin{tikzpicture}
		\foreach \y in {2}{
			\foreach \x in {3.5,4,6,6.5}{
				\draw (\x, \y)[fill]circle[radius=1.0mm];
				
			}
		};
		\draw(4,3)[fill]circle[radius=1.0mm]--(6,3)[fill]circle[radius=1.0mm];
		\node at (4,3)[above=3pt]{$v_1$};
		\node at (6,3)[above=3pt]{$v_2$};
		\draw(4,3)[fill]circle[radius=1.0mm]--(3,3)[fill]circle[radius=1.0mm];
		\draw(3,2.3)[fill]circle[radius=1.0mm]--(4,3)[fill]circle[radius=1.0mm];
		\draw(4,3)[fill]circle[radius=1.0mm]--(4,2)[fill]circle[radius=1.0mm];
		\draw(6,3)[fill]circle[radius=1.0mm]--(7,3)[fill]circle[radius=1.0mm];
		\draw(7,2.3)[fill]circle[radius=1.0mm]--(6,3)[fill]circle[radius=1.0mm];
		\node at (3,2.75){$\vdots$};
		\node at (7,2.75){$\vdots$};
		\node at(2.5,3)[below=3pt]{$r$};
		\node at(7.5,3)[below=3pt]{$p$};
		\draw[decorate,decoration={brace,mirror,amplitude=0.2cm}](7.14,2.3)--(7.14,3);
		\draw[decorate,decoration={brace,mirror,amplitude=0.2cm}](2.86,3)--(2.86,2.3);
		\draw(6.5,2)[fill]circle[radius=1.0mm]--(7,1)[fill]circle[radius=1.0mm];
		\foreach \y in {1}{
			\foreach \x in {3,4,7,6}{
				\draw (\x, \y)[fill]circle[radius=1.0mm];
				
			}
		};
		\foreach \y in{2}{
			\foreach \x in {3.5,4}{
				\draw(\x,\y)--(4,3);
			}
		};
		\draw(3.5,2)[fill]circle[radius=1.0mm]--(3,1)[fill]circle[radius=1.0mm];
		\foreach \y in{2}{
			\foreach \x in {6,6.5}{
				\draw(\x,\y)--(6,3);
			}
		};
		\foreach \y in{1}{
			\foreach \x in {4,6}{
				\draw(\x,\y)--(\x,2);}};
		\draw[decorate,decoration={brace,mirror,amplitude=0.2cm}](3,0.85)--(4,0.85);
		\node at(3.5,0.8)[below=3pt]{$s$};
		\draw[decorate,decoration={brace,mirror,amplitude=0.2cm}](6,0.85)--(7,0.85);
		\node at(6.5,0.8)[below=3pt]{$q$};
		\node at (3.5,1){$\cdots$};
		\node at ((6.5,1){$\cdots$};
		\end{tikzpicture}
		\caption*{$G_1(r,s,p,q)$}
		\label{fig:11.1}
	\end{subfigure}
	\begin{subfigure}{.48\textwidth}
	\centering
	\begin{tikzpicture}
	\foreach \y in {2}{
	\foreach \x in {3.5,4,6,6.5}{
		\draw (\x, \y)[fill]circle[radius=1.0mm];
		
	}
};
\draw(6.5,2)[fill]circle[radius=1.0mm]--(7,1)[fill]circle[radius=1.0mm];
\draw(6.5,2)[fill]circle[radius=1.0mm]--(6,3)[fill]circle[radius=1.0mm];
\draw(3.5,2)[fill]circle[radius=1.0mm]--(3,1)[fill]circle[radius=1.0mm];
\draw(4,2)[fill]circle[radius=1.0mm]--(4,3)[fill]circle[radius=1.0mm];
\draw(4,3)[fill]circle[radius=1.0mm]--(6,3)[fill]circle[radius=1.0mm];
\draw(5,3)[fill]circle[radius=1.0mm];
\node at (4,3)[above=3pt]{$v_1$};
\node at (6,3)[above=3pt]{$v_2$};
\draw(4,3)[fill]circle[radius=1.0mm]--(3,3)[fill]circle[radius=1.0mm];
\draw(3,2.3)[fill]circle[radius=1.0mm]--(4,3)[fill]circle[radius=1.0mm];
\draw(6,3)[fill]circle[radius=1.0mm]--(7,3)[fill]circle[radius=1.0mm];
\draw(7,2.3)[fill]circle[radius=1.0mm]--(6,3)[fill]circle[radius=1.0mm];
\node at (3,2.75){$\vdots$};
\node at (7,2.75){$\vdots$};
	\node at(2.5,3)[below=3pt]{$r$};
\node at(7.5,3)[below=3pt]{$p$};
\draw[decorate,decoration={brace,mirror,amplitude=0.2cm}](7.14,2.3)--(7.14,3);
\draw[decorate,decoration={brace,mirror,amplitude=0.2cm}](2.86,3)--(2.86,2.3);
\foreach \y in {1}{
	\foreach \x in {3,4,7,6}{
		\draw (\x, \y)[fill]circle[radius=1.0mm];
		
	}
};
\foreach \y in{2}{
	\foreach \x in {3.5,...,4}{
		\draw(\x,\y)--(4,3);
	}
};
\foreach \y in{2}{
	\foreach \x in {6,...,6.5}{
		\draw(\x,\y)--(6,3);
	}
};
\foreach \y in{1}{
	\foreach \x in {4,6}{
		\draw(\x,\y)--(\x,2);}};
\draw[decorate,decoration={brace,mirror,amplitude=0.2cm}](3,0.85)--(4,0.85);
\node at(3.5,0.8)[below=3pt]{$s$};
\draw[decorate,decoration={brace,mirror,amplitude=0.2cm}](6,0.85)--(7,0.85);
\node at(6.5,0.8)[below=3pt]{$q$};
\node at (3.5,1){$\cdots$};
\node at ((6.5,1){$\cdots$};
\node at (5,3)[above=3pt]{$v_3$};
	\end{tikzpicture}
	\caption*{$G_2(r,s,p,q)$}
	\label{fig:11.2}
\end{subfigure}
	\begin{subfigure}{.51\textwidth}
		\centering
		\vspace{23pt}
		\begin{tikzpicture}
		\foreach \y in {2}{
		\foreach \x in {3.5,4,6,6.5}{
			\draw (\x, \y)[fill]circle[radius=1.0mm];
			
		}
	};
	\draw(4,3)[fill]circle[radius=1.0mm]--(6,3)[fill]circle[radius=1.0mm];
	\draw(6.5,2)[fill]circle[radius=1.0mm]--(7,1)[fill]circle[radius=1.0mm];
	\draw(6.5,2)[fill]circle[radius=1.0mm]--(6,3)[fill]circle[radius=1.0mm];
	\draw(3.5,2)[fill]circle[radius=1.0mm]--(3,1)[fill]circle[radius=1.0mm];
	\draw(4,2)[fill]circle[radius=1.0mm]--(4,3)[fill]circle[radius=1.0mm];
	\node at (4,3)[above=3pt]{$v_1$};
	\node at (6,3)[above=3pt]{$v_2$};
		\draw(4,3)[fill]circle[radius=1.0mm]--(3,3)[fill]circle[radius=1.0mm];
	\draw(3,2.3)[fill]circle[radius=1.0mm]--(4,3)[fill]circle[radius=1.0mm];
	\draw(6,3)[fill]circle[radius=1.0mm]--(7,3)[fill]circle[radius=1.0mm];
	\draw(7,2.3)[fill]circle[radius=1.0mm]--(6,3)[fill]circle[radius=1.0mm];
	\node at (3,2.75){$\vdots$};
	\node at (7,2.75){$\vdots$};
	\node at(2.5,3)[below=3pt]{$r$};
	\node at(7.5,3)[below=3pt]{$p$};
	\draw[decorate,decoration={brace,mirror,amplitude=0.2cm}](7.14,2.3)--(7.14,3);
	\draw[decorate,decoration={brace,mirror,amplitude=0.2cm}](2.86,3)--(2.86,2.3);
	\foreach \y in {1}{
		\foreach \x in {3,4,7,6}{
			\draw (\x, \y)[fill]circle[radius=1.0mm];
			
		}
	};
	\foreach \y in{2}{
		\foreach \x in {3.5,...,4}{
			\draw(\x,\y)--(4,3);
		}
	};
	\foreach \y in{2}{
		\foreach \x in {6,...,6.5}{
			\draw(\x,\y)--(6,3);
		}
	};
	\foreach \y in{1}{
		\foreach \x in {4,6}{
			\draw(\x,\y)--(\x,2);}};
	\draw[decorate,decoration={brace,mirror,amplitude=0.2cm}](3,0.85)--(4,0.85);
	\node at(3.5,0.8)[below=3pt]{$s$};
	\draw[decorate,decoration={brace,mirror,amplitude=0.2cm}](6,0.85)--(7,0.85);
	\node at(6.5,0.8)[below=3pt]{$q$};
	\node at (3.5,1){$\cdots$};
	\node at ((6.5,1){$\cdots$};
			\node at (4.7,3)[above=3pt]{$v_3$};
		\node at (5.35,3)[above=3pt]{$v_4$};
			\draw(4.7,3)[fill]circle[radius=1.0mm];
		\draw(5.35,3)[fill]circle[radius=1.0mm];
		\end{tikzpicture}
		\caption*{$G_3(r,s,p,q)$}
		\label{fig:11.3}
	\end{subfigure}
	\begin{subfigure}{.48\textwidth}
		\centering
		\vspace{23pt}
		\begin{tikzpicture}
		\foreach \y in {2}{
			\foreach \x in {4,6}{
				\draw (\x, \y)[fill]circle[radius=1.0mm];
				
			}
		};
		\draw(4,3)[fill]circle[radius=1.0mm]--(6,3)[fill]circle[radius=1.0mm];
		\draw(6.5,2)[fill]circle[radius=1.0mm]--(7,1)[fill]circle[radius=1.0mm];
		\draw(6.5,2)[fill]circle[radius=1.0mm]--(6,3)[fill]circle[radius=1.0mm];
		\draw(3.5,2)[fill]circle[radius=1.0mm]--(3,1)[fill]circle[radius=1.0mm];
		\draw(4,2)[fill]circle[radius=1.0mm]--(4,3)[fill]circle[radius=1.0mm];
	\node at (4,3)[above=3pt]{$v_1$};
	\node at (6,3)[above=3pt]{$v_2$};
	\draw(4,3)[fill]circle[radius=1.0mm]--(3,3)[fill]circle[radius=1.0mm];
	\draw(3,2.3)[fill]circle[radius=1.0mm]--(4,3)[fill]circle[radius=1.0mm];
	\draw(6,3)[fill]circle[radius=1.0mm]--(7,3)[fill]circle[radius=1.0mm];
	\draw(7,2.3)[fill]circle[radius=1.0mm]--(6,3)[fill]circle[radius=1.0mm];
	\node at (3,2.75){$\vdots$};
	\node at (7,2.75){$\vdots$};
	\node at(2.5,3)[below=3pt]{$r$};
	\node at(7.5,3)[below=3pt]{$p$};
	\draw[decorate,decoration={brace,mirror,amplitude=0.2cm}](7.14,2.3)--(7.14,3);
	\draw[decorate,decoration={brace,mirror,amplitude=0.2cm}](2.86,3)--(2.86,2.3);
		\foreach \y in {1}{
			\foreach \x in {3,4,7,6}{
				\draw (\x, \y)[fill]circle[radius=1.0mm];
				
			}
		};
		\foreach \y in{2}{
			\foreach \x in {3.5,...,4}{
				\draw(\x,\y)--(4,3);
			}
		};
		\foreach \y in{2}{
			\foreach \x in {6,...,6.5}{
				\draw(\x,\y)--(6,3);
			}
		};
		\foreach \y in{1}{
			\foreach \x in {4,6}{
				\draw(\x,\y)--(\x,2);}};
		\draw[decorate,decoration={brace,mirror,amplitude=0.2cm}](3,0.85)--(4,0.85);
		\node at(3.5,0.8)[below=3pt]{$s$};
		\draw[decorate,decoration={brace,mirror,amplitude=0.2cm}](6,0.85)--(7,0.85);
		\node at(6.5,0.8)[below=3pt]{$q$};
		\node at (3.5,1){$\cdots$};
		\node at ((6.5,1){$\cdots$};
		\node at (5,2)[below=3pt]{$v_4$};
		\node at (5,3)[above=3pt]{$v_3$};
		\draw(4,3)[fill]circle[radius=1.0mm]--(5,3)[fill]circle[radius=1.0mm];
		\draw(5,3)[fill]circle[radius=1.0mm]--(6,3)[fill]circle[radius=1.0mm];
		\draw(5,3)[fill]circle[radius=1.0mm]--(5,2)[fill]circle[radius=1.0mm];
		\end{tikzpicture}
		\caption*{$G_4(r,s,p,q)$}
		\label{fig:11.4}
	\end{subfigure}
	\caption{}
	\label{fig:11}
\end{figure}

Firstly we prove the following claims.

\noindent\textbf{Claim 1.} \emph{Let $k>0$ be an integer and $T^*_k=S(0,k)$.  Then $\rho(T^*_k)=\sqrt{k+1}$.}
	
{\it Proof.}
Suppose $u_1$ is the center of  $T^*_k$ with neighbors  $u_2,\ldots, u_{k+1}$.  We label the leaf adjacent to $u_i$ by $u_{k+i}$ for $i=2,\ldots, k+1$. Let $Z=(z_1,z_2,...,z_n)^T$ be the Perron vector of $T^*_k$ with $z_i$ corresponding to $u_i$. By  symmetry of the components of $Z$ and $\rho(T^*_k)Z=A(T^*_k)Z$,  we have
\begin{equation*} \label{equa4}
        \rho(T^*_k)z_{1}=kz_{2},\quad
        \rho(T^*_k)z_{2}=z_{1}+z_{k+2}, \quad
        \rho(T^*_k)z_{k+2}=z_{2},
\end{equation*}
which lead to $$\rho(T^*_k)z_1=k\rho(T^*_k)z_{k+2}=k\rho(T^*_k)[\rho(T^*_k)z_2-z_1]=\rho^3(T^*_k)z_1-k\rho(T^*_k)z_1.$$ Hence, $\rho(T^*_k)=\sqrt{k+1}$.\\
	
\par

\noindent\textbf{Claim 2. }\emph{ Let $s $ and $q$ be integers such that  $s+1 \geq q \geq2$. Then \begin{equation}\label{eqc2}
\rho(H(2s+2q+4))\le\rho(G_3(0,s,0,q))<\rho(G_3(1,s,1,q-1)),
\end{equation}
where  equality in the left inequality holds if and only if $G_3(0,s,0,q)\cong H(2s+2q+4)$.
 }

{\it Proof.}
 For convenience, suppose $L=G_3(0,s,0,q)$ and $R=G_3(1,s,1,q-1)$ with their vertices  labelled as follows.
\begin{figure}[H]
	\centering
	\begin{subfigure}{.45\textwidth}
		\centering
		\begin{tikzpicture}
			\draw[fill=black](0,0)circle(0.1)
			(0.6,0)circle(0.1)
			(1.2,0)circle(0.1)
			(1.8,0)circle(0.1)
			(1.8,-0.7)circle(0.1)
			(1.8,-1.4)circle(0.1)
			(2.6,-0.7)circle(0.1)
			(2.6,-1.4)circle(0.1)
			(0,-0.7)circle(0.1)
			(0,-1.4)circle(0.1)
			(-0.8,-0.7)circle(0.1)
			(-0.8,-1.4)circle(0.1);
			\draw(0,0)--(1.8,0)
			(0,0)--(0,-1.4)
			(0,0)--(-0.8,-0.7)
			(-0.8,-0.7)--(-0.8,-1.4)
			(1.8,0)--(1.8,-1.4)
			(1.8,0)--(2.6,-0.7)
			(2.6,-0.7)--(2.6,-1.4);		
			\draw[decorate,decoration={brace,mirror,amplitude=0.2cm}](-0.8,-1.55)--(0,-1.55);
			\draw[decorate,decoration={brace,mirror,amplitude=0.2cm}](1.8,-1.55)--(2.6,-1.55);
			\node at (-0.4,-1.95){$s$};
			\node at (2.2,-1.95){$q$};
			\node at (-0.4,-1.3){$\cdots$};
			\node at (2.2,-1.3){$\cdots$};
			\node at (-1.2,-1.4){$v_6$};
			\node at (-1.2,-0.7){$v_5$};
			\node at (0,0.25){$v_1$};
			\node at (0.6,0.25){$v_3$};
			\node at (1.2,0.25){$v_4$};
			\node at (1.8,0.25){$v_2$};
			\node at (2.95,-0.7){$v_7$};
			\node at (2.95,-1.4){$v_8$};
		\end{tikzpicture}
		\caption*{$L=G_3(0,s,0,q)$}
		\label{fig:12.2}
	\end{subfigure}
	\begin{subfigure}{.45\textwidth}
		\centering
		\begin{tikzpicture}
			\draw[fill=black](0,0)circle(0.1)
			(0.6,0)circle(0.1)
			(1.2,0)circle(0.1)
			(1.8,0)circle(0.1)
			(1.8,-0.7)circle(0.1)
			(1.8,-1.4)circle(0.1)
			(2.6,-0.7)circle(0.1)
			(2.6,-1.4)circle(0.1)
			(0,-0.7)circle(0.1)
			(0,-1.4)circle(0.1)
			(-0.8,-0.7)circle(0.1)
			(-0.8,-1.4)circle(0.1)
			(0,0.7)circle(0.1)
			(1.8,0.7)circle(0.1);
			\draw(0,0)--(1.8,0)
			(0,0)--(0,-1.4)
			(0,0)--(-0.8,-0.7)
			(-0.8,-0.7)--(-0.8,-1.4)
			(1.8,0)--(1.8,-1.4)
			(1.8,0)--(2.6,-0.7)
			(2.6,-0.7)--(2.6,-1.4)
			(0,0)--(0,0.7)
			(1.8,0)--(1.8,0.7);		
			\draw[decorate,decoration={brace,mirror,amplitude=0.2cm}](-0.8,-1.55)--(0,-1.55);
			\draw[decorate,decoration={brace,mirror,amplitude=0.2cm}](1.8,-1.55)--(2.6,-1.55);
			\node at (-0.4,-1.95){$s$};
			\node at (2.2,-1.95){$q-1$};
			\node at (-0.4,-1.3){$\cdots$};
			\node at (2.2,-1.3){$\cdots$};
			\node at (-1.2,-1.4){$v_6$};
			\node at (-1.2,-0.7){$v_5$};
			\node at (-0.3,0.15){$v_1$};
            \node at (2.1,0.15){$v_2$};
            \node at (1.2,0.25){$v_4$};
			\node at (0.6,0.25){$v_3$};
			\node at (2.95,-0.7){$v_7$};
			\node at (2.95,-1.4){$v_8$};
            \node at (1.8,0.95){$v_{10}$};
			\node at (0,0.95){$v_9$};
		\end{tikzpicture}
		\caption*{$R=G_3(1,s,1,q-1)$}
		\label{fig:12.3}
	\end{subfigure}
\caption{ }
	\label{fig:12}
\end{figure}
 Denote by $\rho_1=\rho(L)$ and $\rho_2=\rho(R)$. Since $T^*_{q+1}$ is a proper subgraph of $L$, it follows from Claim 1 that $$\rho_1>\rho(T^*_{q+1})=\sqrt{q+2}.$$ Let $X=(x_1,x_2,...,x_n)^T$ be the Perron vector of $L$ with $x_i$ corresponding to the vertex $v_i$, and let $Y=(y_1,y_2,...,y_n)^T$ be the Perron vector of $R$ with $y_i$ corresponding to the vertex $v_i$.

    Suppose $s\geq q$. By symmetry and $\rho_1X=A(L)X$, we have
 \begin{eqnarray}
    \rho_1(x_{2}-x_{7})&=&qx_{7}+x_{4}-x_{2}-x_{8}
    > x_{7}-x_{2}+x_{4}-x_{8},\label{eqhhh4}\\
    \rho_1(x_{3}-x_{4})&=&x_{1}+x_{4}-x_{2}-x_{3},\label{eqhhh5}\\
    \rho_1(x_{5}-x_{7})&=&x_{1}-x_{2}+x_{6}-x_{8},\label{eqhhh6}\\
    \rho_1(x_6-x_8)&=&x_5-x_7,\label{eqhhh7}\\
    \rho_1(x_{1}-x_{2})&=&sx_{5}+x_{3}-qx_{7}-x_{4}.\label{eqhhh8}
\end{eqnarray}
By (\ref{eqhhh4}), we have
\begin{align*}
    (\rho_1+1)(x_{2}-x_{7})&> x_{4}-x_{8}. 
\end{align*}
If $x_{4}\leq x_{8}$, then applying Lemma \ref{lemma13}, we have $\rho(L-v_3v_4+v_3v_8)>\rho_1$. On the other hand, applying  Lemma \ref{lemma12}, we have $\rho(L_{v_3v_4}-v_8)<\rho_1$. Since $L_{v_3v_4}-v_8\cong L-v_3v_4+v_3v_8$, we get a contradiction. Therefore, we have
\begin{align*}
 x_{4}> x_{8} \quad\text{ and } \quad  x_{2}>x_{7}.\label{equ9}
\end{align*}

Note that (\ref{eqhhh5}) implies   $$(\rho_1+1)(x_{3}-x_{4})=x_{1}-x_{2},$$ and  (\ref{eqhhh6}), (\ref{eqhhh7}) imply $$(\rho^2_1-1)(x_{5}-x_{7})=\rho_1(x_{1}-x_{2}).$$ Since $s\ge q$,  (\ref{eqhhh8}) leads to
\begin{align*}
    \rho_1(x_{ 1}-x_{ 2})
    \geq q(x_{ 5}-x_{ 7})+x_{ 3}-x_{ 4}
    =\Big(\frac{q\rho_1}{\rho_1^2-1}+\frac{1}{\rho_1+1}\Big)(x_{ 1}-x_{ 2}),
\end{align*}
which is equivalent with
 $$\Big(\rho_1-\frac{q\rho_1}{\rho_1^2-1}-\frac{1}{\rho_1+1}\Big)(x_{ 1}-x_{ 2})\geq0.$$ Since $$\rho_1-\frac{q\rho_1}{\rho_1^2-1}-\frac{1}{\rho_1+1}>0\quad\text{for}\quad \rho_1>\sqrt{q+2},$$ we have
 \begin{equation}\label{eqx1}
 x_{ 1}\geq x_{ 2}>x_{ 7}.
 \end{equation}
Since $R\cong L- v_7 v_8 +  v_1 v_8$,
 applying Lemma \ref{lemma13}, we have the right inequality in (\ref{eqc2}).

Now suppose $q=s+1$. Let $r =s-1$. Then $s=r+1$ and $q=r+2$. By Claim 1, we have $$\rho_1>\rho(T^*_{q+1})=\sqrt{r+4}\quad \text{and}\quad \rho_2>\rho(T^*_{r+2})=\sqrt{r+3}.$$
 By symmetry and $\rho_1X=A(L)X$, we have
\begin{equation*}
    \begin{cases}
        \text{$\rho_1x_{ 3}=x_{ 1}+x_{ 4}$,} &\quad\text{$\rho_1x_{ 4}=x_{ 2}+x_{ 3}$,}\\
        \text{$\rho_1x_{ 1}=(r+1)x_{ 5}+x_{ 3}$,} &\quad\text{$\rho_1x_{ 2}=(r+2)x_{ 7}+x_{ 4}$,}\\
        \text{$\rho_1x_{ 5}=x_{ 1}+x_{ 6}$,} &\quad\text{$\rho_1x_{ 7}=x_{ 2}+x_{ 8}$,}\\
        \text{$\rho_1x_{ 6}=x_{ 5}$,} &\quad\text{$\rho_1x_{ 8}=x_{ 7}$,}\\
    \end{cases}
\end{equation*}
which lead to (see Appendix A.1)
\begin{equation}\label{eqa1}
\rho_1^6-(2r+7)\rho_1^4+(r+3)(r+4)\rho_1^2-1=0.
\end{equation}  By $\rho_2Y=A(R)Y$, we have
\begin{equation*}
    \begin{cases}
        \text{$\rho_2y_{ 3}=y_{ 1}+y_{ 3},$}\\
        \text{$\rho_2y_{ 1}=(r+1)y_{ 5}+y_{ 3}+y_{ 9},$}\\
        \text{$\rho_2y_{ 5}=y_{ 1}+y_{ 6},$}\\
        \text{$\rho_2y_{ 6}=y_{ 5},$}\\
        \text{$\rho_2y_{ 9}=y_{ 1},$}\\
    \end{cases}
\end{equation*}
which lead to (see Appendix A.2)
\begin{equation}\label{eqa2}
\rho_2^4-(r+4)\rho_2^2-\rho_2+1=0.
 \end{equation}
Notice that the function  $g(x)=x^4-(r+4)x^2-x+1$ is increasing when $x>\sqrt{r+3}$. Since  $g(\sqrt{r+4})<0$, we have
\begin{equation}\label{eqc14}
\rho_2>\sqrt{r+4}.
\end{equation}
Let $$f(x)=x^6-(2r+7)x^4+(r+3)(r+4)x^2-1.$$ Then by (\ref{eqa1}) and (\ref{eqc14}) we have
\begin{align*}
    f(\rho_2)-f(\rho_1)&=\rho_2^6-(2r+7)\rho_2^4+(r+3)(r+4)\rho_2^2-1\\
    &=\rho_2^2\left[\rho_2^4-(r+4)\rho_2^2\right]-(r+3)\left[\rho_2^4-(r+4)\rho_2^2\right]-1\\
    &=\rho_2^2(\rho_2-1)-(r+3)(\rho_2-1)-1\\
    &=\rho_2^3-\rho_2^2-(r+3)\rho_2+r+2\\
    &>0.
\end{align*}
 Since $f(x)=x^6-(2r+7)x^4+(r+3)(r+4)x^2-1$ is  increasing when $x>\sqrt{r+4},$  we have the right inequality in (\ref{eqc2}).

By (\ref{eqx1}), applying Lemma \ref{lemma13} we can see that a graph in $\{G_3(0,s,0,q): s+ q=(n-4)/2\}$ attains the minimum spectral radius if and only if $\{s,q\}=\{\lceil(n-4)/4\rceil,\lfloor(n-4)/4\rfloor\}$, i.e., $G_3(0,s,0,q)\cong H(2s+2q+4)$.
 This completes the proof of Claim 2.\\

\par

\noindent\textbf{Claim 3. }\emph{Let $s,q\geq1$ be integers. If $s> q$, then $$\rho(G_3(0,s,1,q))<\rho(G_3(1,s,0,q))$$ and
 $$\rho(G_3(0,s,1,q))<\rho(G_3(0,s+1,1,q-1)).$$ If $s=q$, then $$\rho(G_3(1,s,0,q)=\rho(G_3(0,s,1,q))<\rho(G_3(1,s-1,0,q+1)).$$
}

{\it Proof.} Let $W=G_3(0,s,1,q)$ and we label its vertices  as in Figure \ref{fig:12.4}.   Let $ \rho_3=\rho(W)$ with its
	Perron vector being  $Z=(z_1,z_2,...,z_n)^T$, where $z_i$ is corresponding to the vertex $v_i$ for $i=1,\ldots,n$.

\begin{figure}[H]
	\centering
	 \begin{tikzpicture}
			\draw[fill=black](0,0)circle(0.1)
			(0.6,0)circle(0.1)
			(1.2,0)circle(0.1)
			(1.8,0)circle(0.1)
			(1.8,-0.7)circle(0.1)
			(1.8,-1.4)circle(0.1)
			(2.6,-0.7)circle(0.1)
			(2.6,-1.4)circle(0.1)
			(0,-0.7)circle(0.1)
			(0,-1.4)circle(0.1)
			(-0.8,-0.7)circle(0.1)
			(-0.8,-1.4)circle(0.1)
			(1.8,0.7)circle(0.1);
			\draw(0,0)--(1.8,0)
			(0,0)--(0,-1.4)
			(0,0)--(-0.8,-0.7)
			(-0.8,-0.7)--(-0.8,-1.4)
			(1.8,0)--(1.8,-1.4)
			(1.8,0)--(2.6,-0.7)
			(2.6,-0.7)--(2.6,-1.4)
			(1.8,0.7)--(1.8,0);		
			\draw[decorate,decoration={brace,mirror,amplitude=0.2cm}](-0.8,-1.55)--(0,-1.55);
			\draw[decorate,decoration={brace,mirror,amplitude=0.2cm}](1.8,-1.55)--(2.6,-1.55);
			\node at (-0.4,-1.95){$s$};
			\node at (2.2,-1.95){$q$};
			\node at (-0.4,-1.3){$\cdots$};
			\node at (2.2,-1.3){$\cdots$};
			\node at (-1.2,-1.4){$v_6$};
			\node at (-1.2,-0.7){$v_5$};
			\node at (0,0.3){$v_1$};
			\node at (0.6,0.3){$v_3$};
			\node at (1.2,0.3){$v_4$};
			\node at (2.1,0.2){$v_2$};
			\node at (2.95,-0.7){$v_7$};
			\node at (2.95,-1.4){$v_8$};
			\node at (1.8,0.98){$v_{9}$};
		\end{tikzpicture}
		\caption{The graph $W=G_3(0,s,1,q)$}
		\label{fig:12.4}
\end{figure}
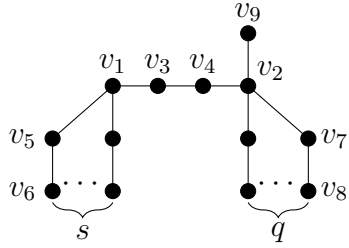

Suppose $s> q$. By $\rho_3Z=A(W)Z$,
 we have
 \begin{eqnarray}
 \rho_3(z_{3}-z_{4})&=&z_{1}+z_{4}-z_{2}-z_{3},\label{eqc31}\\
 \rho_3(z_{5}-z_{7})&=& z_{1}+z_{6}-z_{2}-z_{8},\label{eqc32}\\
 \rho_3(z_{1}-z_{2})&=&sz_{5}+z_{3}-qz_{7}-z_{4}-z_9.\label{eqc33}
  \end{eqnarray}
By (\ref{eqc31}), we have $$z_{3}-z_{4}=\frac{1}{\rho_3+1}(z_{1}-z_{2}).$$
By (\ref{eqc32}), we have
  $$\rho_3^2(z_{5}-z_{7})=\rho_3(z_{1}-z_{2})+\rho_3z_6-\rho_3z_8=\rho_3(z_{1}-z_{2})+z_{5}-z_{7},$$ which implies  $$z_{5}-z_{7}=\frac{\rho_3}{\rho_3^2-1}(z_{1}-z_{2}).$$
  Since $\rho_3z_9=z_2$ and $\rho_3z_7=z_2+z_8$, we have $z_7>z_9$.
  Now by (\ref{eqc33}) we have
    \begin{align*}
        \rho_3(z_{1}-z_{2})
         >s(z_{5}-z_{7})+z_{3}-z_{4}
         =\Big(\frac{s\rho_3}{\rho_3^2-1}+\frac{1}{\rho_3+1}\Big)(z_{1}-z_{2}),
    \end{align*}
which leads to
 $$(\rho_3-\frac{s\rho_3}{\rho_3^2-1}-\frac{1}{\rho_3+1})(z_{1}-z_{2})>0.$$
 Denote by $$h(x)=x-\frac{sx}{x^2-1}-\frac{1}{x+1}.$$
 Then $h(\sqrt{s+2})>0$ and $h(x)$ is increasing when $x>1$. Recall that $\rho_3>\rho(T^*_{s+1})=\sqrt{s+2}$. We have
 \begin{equation}\label{eqc37}
 h(\rho_3)=\rho_3-\frac{s\rho_3}{\rho_3^2-1}-\frac{1}{\rho_3+1}>0.
 \end{equation}  Hence, $z_{1}>z_{2}$.

 Applying Lemma \ref{lemma13},  we have
  $$\rho_3=\rho(G_3(0,s,1,q))<\rho(G_3(0,s,1,q)-v_2 v_9+v_1 v_9)=\rho(G_3(1,s,0,q)).$$
Similarly, considering the graph obtained from $W$ by cutting a $P_2$ attached to $v_2$ and attaching it to $v_1$, we have
$$\rho(G_3(0,s,1,q))<\rho(G_3(0,s+1,1,q-1)).$$

   Suppose $s=q$.  By $\rho_3Z=A(W)Z$ we have
   \begin{eqnarray*}
   \rho_3(z_9-z_{5})&=&z_{2}-z_{1}-z_{6},\label{eqc34}\\
   \rho_3(z_{4}-z_{3})&=&z_{2}+z_{3}-z_{1}-z_{4},\label{eqc35}\\
   \rho_3(z_{2}-z_{1})&=&sz_{7}+z_{4}+z_9-sz_{5}-z_{3}.\label{eqc36}
   \end{eqnarray*}
   Similarly as above, we have  $z_{7}>z_9$  and
    \begin{align*}
        \rho_3^2(z_9-z_{5})=\rho_3(z_{2}-z_{1}-z_{6})
         =s(z_{7}-z_{5})+z_9-z_{5}+z_{4}-z_{3}
         >(s+1)(z_9-z_{5})+z_{4}-z_{3},
    \end{align*}
    which is equivalent with
    \begin{align}
        \left[\rho_3^2-(s+1)\right](z_9-z_{5})>z_{4}-z_{3}.\label{equ10}
    \end{align}

    On the other hand, by (\ref{eqc35}) and (\ref{eqc36}) we have $$z_{2}-z_{1}=(\rho_3+1)(z_{4}-z_{3})$$
    and
    \begin{align*}
        \rho_3(z_{2}-z_{1})
         >s(z_9-z_{5})+z_{4}-z_{3},
    \end{align*}
which lead to
    \begin{align}
        \left[\rho_3(\rho_3+1)-1\right](z_{4}-z_{3})>z_9-z_{5}.\label{equ11}
    \end{align}
    By (\ref{equ10}) and (\ref{equ11}), we have $$\left[\rho_3^2-(s+1)-\frac{1}{\rho_3(\rho_3+1)-1}\right](z_9-z_{5})>0.$$
  Using similar arguments as the derivation of (\ref{eqc37}), we have $$\rho_3^2-(s+1)-\frac{1}{\rho_3(\rho_3+1)-1}>0,$$ since $\rho_3>\rho(T^*_{s+1})=\sqrt{s+2}$. Therefore, $z_9>z_{5}$. Applying Lemma \ref{lemma13}, we have $$\rho(G_3(1,s,0,q)=\rho(G_3(0,s,1,q)=\rho(W)<\rho(W-v_5v_6+v_9v_6)=\rho(G_3(1,s-1,0,q+1)).$$ This completes the proof of Claim 3.\\

  \par

 Now we are ready to present  the proof for the case $k=n-2$ of Theorem \ref{th1}. \\
 \par

{\it Proof for the case $k=n-2$ of Theorem \ref{th1}.}
  Suppose $G$ attains the minimum spectral radius  in $\mathcal{G}_{n,n-2}$ with $n\ge 10$. Applying Theorem \ref{th2}, $G$ is a tree.

  Suppose $S$ is a maximum dissociation set of $G$  such that $G[S]$ consists of $\gamma$ isolated vertices $x_1,\ldots,x_{\gamma}$ and $\tau$ disjoint edges $y_1z_1,\ldots, y_{\tau}z_{\tau}$. Let  $V(G)\backslash S=\{ v_1,v_2\}$.
   Since $G$ is connected,   the vertex $x_{i}$ is adjacent to at least one of $\{v_1,v_2\}$ for all $i\in\{1,\ldots,\gamma\}$.  Similarly, the edge $y_jz_j$  is adjacent to at least one of $\{v_1,v_2\}$ for all $j\in\{1,\ldots,\tau\}$.

     We claim  that   $G\cong G_i(r,s,p,q)$ with $i\in\{1,2,3,4\},$ where $(r,s),(p,q)\not\in\{(0,0),(1,0)\}$ in the case $i=1$, and  $(r,s)\ne (0,0)$, $(p,q)\ne (0,0)$ in the case $i=2$. In fact, if $v_1 v_2\in E(G)$, then $G\cong G_1(r,s,p,q)$. Moreover,  we have $(r,s),(p,q)\not\in\{(0,0),(1,0)\}$, since $diss(G)=n-2$.  If $v_1 v_2\not\in E(G)$, then either there is an isolated vertex in $G[S]$ adjacent to both of  $v_1$ and $v_2$, or there is an edge in $G[S]$ adjacent to both of  $v_1$ and $v_2$. In the former case, we have $G\cong G_2(r,s,p,q)$ with $(r,s)\ne (0,0)$ and $(p,q)\ne (0,0)$, since $diss(G)=n-2$; while in the latter case, we have $G\cong   G_i(r,s,p,q)$ with $i\in \{3,4\}$.

     Next we distinguish two cases to prove  $G\cong G_3(r,s,p,q)$ for some nonnegative integers $r,s,p,q$.

	{\it  Case 1.}  $G$ contains an internal path  with $v_1$ and $v_2$ being its ends. Then $$G\cong G_i(r,s,p,q)\quad \text{ with}\quad i\in \{1,2,3\}.$$

 Suppose $ G\cong G_1(r,s,p,q)$ with $(r,s),(p,q)\not\in\{(0,0),(1,0)\}$. Since $n\ge 10$,  at least one of $r+2s$, $p+2q$ is larger than or equal to 4, say, $r+2s\ge 4$.
 Let $T$ be the tree obtained from $G_{v_1v_2}^{(2)}$ by deleting two vertices from the leaves and the edges attached to $v_1$. Then  $T \cong G_3(r',s',p,q)$ for some integers $r'$ and $s'$. Moreover, we have $diss(T)=n-2$. Applying Lemma \ref{lemma3} and Lemma \ref{lemma12}, we have $\rho(T )<\rho(G)$, which contradicts the assumption on $G$.

 Similarly, if  $ G\cong G_2(r,s,p,q)$ with  $(r,s)\ne (0,0)$, $(p,q)\ne (0,0)$, we can obtain a tree $T$ from $G_{v_1v_3} $ by deleting one vertex such that
 $$diss(T)=n-2\quad \text{and}\quad \rho(T )<\rho(G),$$
 a contradiction.

  Therefore, we have $G\cong G_3(r,s,p,q)$.

 {\it Case 2.} $G$ does not contain  any internal path  with $v_1$ and $v_2$ being its ends. Then either $G\cong G_4(r,s,p,q)$ or
 \begin{equation}\label{sep927}
 G\cong G_i(r,s,p,q) ~~\text{with}~~  i\in\{1,2,3\} ~~s.t.~~  d(v_1)\le 2 ~~\text{or}~~  d(v_2)\le 2.
  \end{equation}

 Suppose $G\cong G_4(r,s,p,q)$. If   $d(v_1)\ge3$ and $ d(v_2)\ge3$, let $T'$ be the tree obtained from $G-v_4$ by subdividing the edge $v_1v_3$. Then by Lemma \ref{lemma3} and Lemma \ref{lemma12}, we have $\rho(T')<\rho(G)$. Since $diss(T')=n-2$, we get a contradiction.
 If   $d(v_1)\le2$ or  $ d(v_2)\le2$, say, $d(v_1)=2$. If $v_1$ is adjacent to a leaf, then $G\cong G_1(1,1,p,q)$ and we can deduce a contradiction as in Case 1. If $v_1$ is attached by an edge $e$, then we construct a tree $T$ from $G_{v_2v_3}$ by  deleting the leaf incident with $e$. Again, by Lemma \ref{lemma3} and Lemma \ref{lemma12}, we have $\rho(T')<\rho(G)$ and $diss(T)=n-2$, a contradiction.

 Therefore, we have (\ref{sep927}), and hence $G\cong G_3(r,s,p,q)$ for some nonnegative integers $r,s,p,q$.\\
 \par

Now if $d(v_i)\ge 3$ and $v_i$ is adjacent to two leaves $u_1,u_2$, let $G'=G-v_iu_1+u_1u_2$. Then applying Lemma \ref{lemma2}, we have $diss(G')=n-2$ and $\rho(G')<\rho(G)$, a contradiction. Therefore, we have $r\le 1$ and $p\le 1$. Moreover, applying Claim 2, we have $r+p\le 1$.

Note that $$diss(G_3(r,s,p,q))=n-2 ~~\text{for all}~~ r,s,p,q~~s.t.~~ r+p+2s+2q=n-4.$$ If $n$ is even, then $r=p=0$ and $G\cong G_3(0,s,0,q)$. Since $G_3(0,s,0,q)\cong G_3(0,q,0,s)$, we may assume $s\ge q$. By Claim 2, we have $G\cong H(n)$.
If $n$ is odd, then we have $(r,p)=(1,0)$ or $(0,1)$. By Claim 3, we also have $G\cong H(n)$.

This completes the proof of Theorem \ref{th1}.\hspace{8.5cm} $\square$

\section*{Acknowledgement}
 This work was supported by the National Natural Science Foundation of China (No. 12171323),  Guangdong Basic and Applied Basic Research
Foundation (No. 2022A1515011995) and the Science and  Technology Foundation of Shenzhen City (No. JCYJ20210324095813036).

\section*{Appendix}
\subsection*{\bf A.1 Proof of the equation (\ref{eqa1})}
Recall that
\begin{subnumcases}
{}
\text{$\rho_1x_{3}=x_{1}+x_{4}$,}\label{16a} \\
\text{$\rho_1x_{1}=(r+1)x_{5}+x_{3}$,}\label{16b} \\
\text{$\rho_1x_{5}=x_{1}+x_{6}$,}\label{16c} \\
\text{$\rho_1x_{6}=x_{5}$}\label{16d}
\end{subnumcases}
and
\begin{subnumcases}
{}
\text{$\rho_1x_{4}=x_{2}+x_{3}$,}\label{17a}\\
\text{$\rho_1x_{2}=(r+2)x_{7}+x_{4}$,}\label{17b}\\
\text{$\rho_1x_{7}=x_{2}+x_{8}$,}\label{17c}\\
\text{$\rho_1x_{8}=x_{7}$}.\label{17d}
\end{subnumcases}
By (\ref{16d}), substituting $x_6$ with ${x_{5}}/{\rho_1}$ in (\ref{16c}), we have
\begin{equation*}
\rho_1x_{5}=x_{1}+\frac{1}{\rho_1}x_{5},
\end{equation*}
which leads to
\begin{equation*}
 x_{5}=\frac{\rho_1}{\rho_1^2-1}x_{1}.\label{19}
\end{equation*}
Combining this with (\ref{16b}), we have
\begin{equation*}
\rho_1x_{1}=\frac{(r+1)\rho_1}{\rho_1^2-1}x_{1}+x_{3},
\end{equation*}
which implies
\begin{equation*}
x_{3}=\left[\rho_1-\frac{(r+1)\rho_1}{\rho^2-1}\right]x_1=\left[\rho_1-\frac{(r+1)\rho_1}{\rho^2-1}\right](\rho_1x_{3}-x_{4}),
\end{equation*}
where the last equality follows from (\ref{16a}). It follows that
  \begin{equation}
  \left[\rho_1^2-1-\frac{(r+1)\rho_1^2}{\rho_1^2-1}\right]x_{3}=\left[\rho_1-\frac{(r+1)\rho_1}{\rho_1^2-1}\right]x_{4}\label{22}
\end{equation}

Similarly, by (\ref{17a}), (\ref{17b}), (\ref{17c}), (\ref{17d}) we have \begin{equation}
\left[\rho_1^2-1-\frac{(r+2)\rho_1^2}{\rho_1^2-1}\right]x_{4}
=\left[\rho_1-\frac{(r+2)\rho_1}{\rho_1^2-1}\right]x_{3}.\label{23}
\end{equation}

Now combining (\ref{22}) with (\ref{23}), we have
\begin{align*}
&\left[\rho_1^2-1-\frac{(r+1)\rho_1^2}{\rho_1^2-1}\right]
\left[\rho_1^2-1-\frac{(r+2)\rho_1^2}{\rho^2_1-1}\right]
=\left[\rho_1-\frac{(r+1)\rho_1}{\rho_1^2-1}\right]\left[\rho_1-\frac{(r+2)\rho_1}{\rho_1^2-1}\right]\\
\end{align*}
which is equivalent with (\ref{eqa1}).

\subsection*{\bf A.2 Proof of the equation (\ref{eqa2})}
Recall that
\begin{subnumcases}
{ }
\text{$\rho_2y_{3}=y_{1}+y_{3};$}\label{11a}\\
\text{$\rho_2y_{1}=(r+1)y_{5}+y_{3}+y_{9};$}\label{11b}\\
\text{$\rho_2y_{5}=y_{1}+y_{6};$}\label{11c}\\
\text{$\rho_2y_{6}=y_{5};$}\label{11d}\\
\text{$\rho_2y_{9}=y_{1};$\label{11e}}
\end{subnumcases}
Combining (\ref{11a}) with (\ref{11e}), we have
\begin{equation*}\label{a21}
y_3=\frac{\rho_2}{\rho_2-1}y_9.
\end{equation*}
By (\ref{11c}), (\ref{11d}) and (\ref{11e}), we have
\begin{equation*}
y_6=\frac{\rho_2}{\rho_2^2-1}y_9\quad \text{and}\quad y_5= \frac{\rho_2^2}{\rho_2^2-1}y_9.\label{equ12}
\end{equation*}
Now substituting $y_1=\rho_2y_9, y_3= {\rho_2}y_9/({\rho_2-1}), y_5= {\rho_2^2}y_9/({\rho_2^2-1})$ in (\ref{11b}), we have
\begin{equation*}
\rho_2^2y_9=\frac{(r+1)\rho_2^2}{\rho_2^2-1}y_{9}+\frac{\rho_2}{\rho_2-1}y_9+ y_{9}.
\end{equation*}
Therefore, we have
\begin{equation*}
\rho_2^2=\frac{(r+1)\rho_2^2}{\rho_2^2-1}+\frac{\rho_2}{\rho_2-1}+ 1,
\end{equation*}
which is equivalent with (\ref{eqa2}).

\end{document}